\numberwithin{equation}{section}
\numberwithin{theorem}{section}
\numberwithin{figure}{section}
\spnewtheorem{assumption}{Assumption}{\bfseries}{\rmfamily}
\spnewtheorem*{lem*}{Lemma}{\itshape}{\rmfamily}
\spnewtheorem*{rem*}{Remark}{\itshape}{\rmfamily}
\newcommand{\hd}{\delta}
\newcommand{\dis}{\varrho}
\begin{document}

\title{Schauder-type estimates for higher-order parabolic SPDEs\thanks{K. Du was partially supported by the National Natural Science Foundation of China (No. 11801084).}
}

\titlerunning{Higher-order SPDEs}        

\author{Yuxing Wang         \and
        Kai Du 
}

\authorrunning{Y.~Wang, K.~Du} 

\institute{Yuxing Wang \at
              School of Mathematical Sciences, Fudan University, Shanghai 200433, China \\
              \email{wangyuxing@fudan.edu.cn}           
           \and
           Kai Du (corresponding author)\at
              Shanghai Center for Mathematical Sciences, Fudan University, Shanghai 200438, China \\
              \email{kdu@fudan.edu.cn}
}

\date{}

\maketitle

\begin{abstract}
In this paper we consider the Cauchy problem for $2m$-order
stochastic partial differential equations of parabolic type 
in a class of stochastic H\"{o}lder spaces. 
The H\"{o}lder estimates of solutions and their spatial derivatives up
to order $2m$ are obtained, based on which the existence and 
uniqueness of solution is proved.
An interesting finding of this paper is that 
the regularity of solutions relies on a coercivity 
condition that differs when $m$ is odd or even: 
the condition for odd $m$ 
coincides with the standard parabolicity condition in the literature for higher-order stochastic partial differential equations,
while for even $m$ it depends on the integrability index $p$. 
The sharpness of the new-found coercivity condition is demonstrated by an example.

\keywords{higher-order stochastic partial differential equations \and coercivity
condition \and H\"{o}lder spaces \and Schauder estimates}
 \subclass{35R60 \and 60H15 \and 35K30}
\end{abstract}

\section{Introduction}

Let $(\varOmega,\mathcal{F},(\mathcal{F}_{t})_{t\geq0},\mathbb{P})$ be a complete filtered probability space 
and $\{w_{\cdot}^{k}\}$ a sequence of independent standard Wiener processes 
adapted to the filtration $\mathcal{F}_{t}$.
Consider the Cauchy problem for the following $2m$-order stochastic partial differential
equations (SPDEs) of non-divergence form:
\begin{equation}
\text{\ensuremath{\mathrm{d}}}u=\bigg[(-1)^{m+1}\!\!\!\sum_{|\alpha|,\left|\beta\right|\leq m}A_{\alpha\beta}D^{\alpha+\beta}u+f\bigg]\mathrm{d}t+\sum_{k=1}^{\infty}\bigg[\sum_{|\alpha|\leq m}B_{\alpha}^{k}D^{\alpha}u+g^{k}\bigg]\mathrm{d}w_{t}^{k},\label{eq:dq}
\end{equation}
where the coefficients, the free terms, and the unknown function are all
random fields defined on $\mathbb{R}^{n}\times [0,\infty)\times\varOmega$
and adapted to $\mathcal{F}_{t}$.
Typical examples of Equation
(\ref{eq:dq}) include the
Zakai equation (see \cite{zakai1969optimal,rozovskii1990stochastic} for example),
linearised stochastic
Cahn--Hilliard equations
(see \cite{da1996stochastic,cardon2001cahn} for example), and so on.
General solvability theory for higher-order SPDEs of type (\ref{eq:dq}) 
was first investigated in \cite{krylov1979stochastic}
under the framework of Hilbert spaces.
This paper concerns the existence, uniqueness and regularity of solutions of (\ref{eq:dq})
in some H\"{o}lder-type spaces that will be defined later.
Regularity theory for linear equations often plays
an important role in the study of nonlinear stochastic equations, see
\cite{walsh1986introduction,chow2014stochastic,da2014stochastic}
and references therein.

The weak solution of Equation (\ref{eq:dq}), which satisfies the equation 
in the (analytic) distribution
sense, and its regularity in the framework of Sobolev spaces
have been investigated by many researchers. 
Results for the second-order case (namely $m=1$) are numerous and fruitful; for instance, a complete
$L^{p}$-theory $(p\geq2)$ of second-order parabolic SPDEs has been
developed, see for example \cite{pardoux1975equations,krylov1977cauchy,krylov1979stochastic,rozovskii1990stochastic}
for $p=2$ and \cite{krylov1996l_p,krylov1999analytic,krylov2000spdes,krylov200516,krylov2008brief}
for $p\geq2$; degenerate equations were
addressed in, for example, \cite{krylov1986characteristics}; 
and the Dirichlet problem were also extensively studied in many publications such as \cite{krylov1999sobolev,kim2004p,kim2004stochastic,cioica2013lq,kim2014weighted,lindner2014singular,kim2015bmo,du2018w2}.
For higher-order SPDEs, Krylov and Rozovskii \cite{krylov1979stochastic} applied their abstract result to obtain the existence and uniqueness of
solutions in the Sobolev space $W^{m}_2(\mathbb{R}^{n})$.
Recently,  
van Neeven et al.~\cite{van2012maximal} and 
Portal and Veraar \cite{portal2019stochastic}
obtained some maximal $L^{p}$-regularity results for strong solutions
of abstract stochastic parabolic time-dependent problems,
which can also apply to higher-order SPDEs with proper conditions. 

Another approach to the regularity problem of SPDEs is based on some H\"older spaces,
corresponding to the celebrated Schauder theory for classical elliptic and parabolic PDEs
(see \cite{gilbarg2015elliptic} and references therein).
This paper adopts this strategy to study Equation \eqref{eq:dq},
stimulated by recent progress of the related research on second-order SPDEs.
Actually, a $C^{2+\delta}$-theory for \eqref{eq:dq} with $m=1$ was once
an open problem proposed by Krylov~\cite{krylov1999analytic},
which was partially
addressed by Mikulevicius~\cite{mikulevicius2000cauchy}, and generally
solved by Du and Liu~\cite{du2019cauchy} very recently. 
Introducing a H\"older-type space $\mathcal{C}_{p}^{\delta}$ containing all random fields $u$ satisfying
\[
\|u\|_{\mathcal{C}_{p}^{\delta}}:=\bigg[\sup_{t,\,x}\mathbb{E}|u(x,t)|^{p}+\sup_{t,\,x\neq y}\frac{\mathbb{E}|u(x,t)-u(y,t)|^{p}}{|x-y|^{\delta p}}\bigg]^{\frac{1}{p}}<\infty
\]
with some constants $\delta \in(0,1)$ and $p\in[2,\infty)$,
they proved that, under natural conditions
on the coefficients, the solution $u$ and its derivatives $D u$
and $D^{2}u$ belong to $\mathcal{C}_{p}^{\delta}$,
provided that $f$, $g$ and $D g$ belong to the same space. 
In addition, Du and Liu~\cite{du2019cauchy} also obtained H\"{o}lder continuity in time of $D^{2}u$ with time irregular coefficients.
A similar $C^{2+\delta}$-theory was also obtained recently for systems of second-order SPDEs in \cite{du2017stochastic}.

This paper aims to prove a Schauder-type estimate for Equation (\ref{eq:dq})
based on the space $\mathcal{C}_{p}^{\delta}$.
To get more insight into such a kind of regularity of higher-order equations, 
let us recall some relevant work on deterministic PDEs.
Boccia~\cite{Boccia2013Schauder} derived Schauder estimates for solutions of $2m$-order parabolic systems of non-divergence form in the classical
$C_{x}^{2m+\hd}$-space provided that the free term $f$ (there are no terms like $g^k$
in deterministic equations) belongs to $C_{x}^{\hd}$,
and for the divergence form Dong and Zhang~\cite{dong2015schauder} 
obtained $C_{x}^{m+\hd}$ regularity.
Considering the feature of stochastic integral terms in SPDEs, a natural form of Schauder estimates for Equation (\ref{eq:dq}) must be like this:
\emph{the $\mathcal{C}_{p}^{\delta}$-norms of $u$ and its derivatives up to order $2m$
are dominated by the $\mathcal{C}_{p}^{\delta}$-norms of $f$ and $D^\alpha g$ with $|\alpha|\le m$}.

What surprises us during this work is 
not the above natural assertion but the structural condition
that ensures the validity of this assertion.
Let us give some explanation.
It is well-known that the classical Schauder estimate for PDEs or PDE systems is based on
certain coercivity conditions imposed on the leading coefficients and usually called
strong ellipticity or strong parabolicity,
and for second-order SPDEs either $L^p$-theory or $C^{2+\delta}$-theory
requires a stochastic version of such conditions
(see \cite{krylov1999analytic,du2019cauchy} for example).
The solvability result of higher-order SPDEs in the space $W^m_2(\mathbb{R}^n)$
obtained \cite{krylov1979stochastic} relied on the following condition:
there is a constant $\lambda>0$ such that for all $\xi_{\alpha}\in\mathbb{R}$,
\begin{align}\label{eq:parbolic1}
\sum_{|\alpha|=|\beta|=m}2A_{\alpha\beta}\xi_{\alpha}\xi_{\beta} 
- \lambda\sum_{|\alpha|=m}\left|\xi_{\alpha}\right|^{2}& \geq
\sum_{k=1}^{\infty}\bigg|\sum_{|\alpha|=m}B_{\alpha}^{k}\xi_{\alpha}\bigg|^{2}.
\end{align}
This is a natural condition as it can reduce to the standard ones 
for PDEs and for second-order SPDEs.
However, things may change when one considers $L^p$-integrability ($p>2$)
rather than square-integrability;
more specifically, the coercivity condition \eqref{eq:parbolic1}
being adequate for $L^2$-theory
seems not to be sufficient for $L^p$-integrability of solutions or their derivatives
when $m \ge 2$.
An indirect evidence is that, when the abstract maximal $L^{p}$-regularity results
obtained in \cite{van2012maximal,portal2019stochastic} applied to 
higher-order SPDEs of type (\ref{eq:dq})
the coefficients $B_\alpha$ with $|\alpha|=m$ were required to 
either be sufficiently small or have some additional analytic properties 
(see~\cite{portal2019stochastic} for details).
Similar phenomena have been found also in complex valued SPDEs (see~\cite{brzezniak2012stochastic})
and systems of second-order SPDEs (see~\cite{kim2013note,du2017stochastic}).
This seems to be a unique feature of stochastic equations in contrast to
deterministic PDEs.

A major contribution of this paper is the finding of a $p$-dependent
coercivity condition that is just a small modification of \eqref{eq:parbolic1}
but perfectly works for the Schauder
theory for Equation (\ref{eq:dq}) based on $\mathcal{C}_{p}^{\delta}$.
Let us state this condition as below: 
with some constants $\lambda>0$ and $p\ge 2$ it holds that
\begin{align}\label{eq:parbolic}
\sum_{|\alpha|=|\beta|=m}2A_{\alpha\beta}\xi_{\alpha}\xi_{\beta} 
- \lambda\sum_{|\alpha|=m}\left|\xi_{\alpha}\right|^{2}& \geq\frac{p+(-1)^{m}(p-2)}{2}\sum_{k=1}^{\infty}\bigg|\sum_{|\alpha|=m}B_{\alpha}^{k}\xi_{\alpha}\bigg|^{2}\\
 & = \left\{
\begin{aligned}
&\sum_{k=1}^{\infty}\bigg|\sum_{|\alpha|=m}B_{\alpha}^{k}\xi_{\alpha}\bigg|^{2} & \text{when }m\text{ is odd},\\
&(p-1)\sum_{k=1}^{\infty}\bigg|\sum_{|\alpha|=m}B_{\alpha}^{k}\xi_{\alpha}\bigg|^{2} & \text{when }m\text{ is even}.
\end{aligned}\right.
\nonumber
\end{align}
Obviously, this condition is really $p$-dependent \emph{only} when $m$ is even,
and for odd $m$ it turns to be the same with \eqref{eq:parbolic1}.
Though it might look strange at first glance, 
the following example demonstrates its sharpness to some extent.
\begin{example}
\label{exa:plambda} Given $\mu\in\mathbb{R}$, we consider the following equation on the torus $\mathbb{T}\coloneqq\mathbb{R}/(2\pi\mathbb{Z})$:
\begin{equation}\label{eq:eg}
\textrm{d}u=(-1)^{m+1}D^{2m}u\,\textrm{d}t+\mu D^{m}u\,\textrm{d}w_{t}
\end{equation}
with the initial condition 
$$u(x,0)=\sum_{n\in\mathbb{Z}}\mathrm{e}^{-n^{2m}}\cdot\mathrm{e}^{\sqrt{-1}nx},
\quad x\in\mathbb{T}.$$ 
If $\mu^{2}<2$,
from Theorem~3.2.1 in \cite{krylov1979stochastic} this equation admits a unique solution $u$ in $L^{2}(\Omega;C([0,T];H^{l}(\mathbb{T})))$
for any integer $l$.
However, we have the following lemma.
\end{example}

\begin{lemma}
\label{lem:sharpness}Let $m$ be even and $\mu^{2}<2$. If $p>1+2/\mu^2$,
then $\mathbb{E}\left\Vert u(\cdot,t)\right\Vert _{L^{2}(\mathbb{T})}^{p}=+\infty$
for any $t>2/\varepsilon$ with $\varepsilon=(p-1)\mu^{2}-2$. Consequently, $\sup_{x\in\mathbb{T}}\mathbb{E}|u(x,t)|^{p}=+\infty$
for any $t>2/\varepsilon$.
\end{lemma}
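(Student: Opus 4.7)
The plan is to diagonalise the equation using Fourier series on the torus. Writing $u(x,t)=\sum_{n\in\mathbb{Z}}\hat{u}_n(t)\mathrm{e}^{\sqrt{-1}nx}$ and using $D^{2m}\mathrm{e}^{\sqrt{-1}nx}=(-1)^m n^{2m}\mathrm{e}^{\sqrt{-1}nx}$ and $D^{m}\mathrm{e}^{\sqrt{-1}nx}=(\sqrt{-1}n)^{m}\mathrm{e}^{\sqrt{-1}nx}$, the equation decouples into a family of scalar linear SDEs
\[
\mathrm{d}\hat{u}_n=-n^{2m}\hat{u}_n\,\mathrm{d}t+\mu(\sqrt{-1}n)^m\hat{u}_n\,\mathrm{d}w_t,\qquad \hat{u}_n(0)=\mathrm{e}^{-n^{2m}}.
\]
Since $m$ is even, $(\sqrt{-1}n)^m=(-1)^{m/2}n^m$ is real with square $n^{2m}$, so each $\hat{u}_n$ is a geometric Brownian motion whose explicit solution is obtained by Itô's formula, namely
\[
\hat{u}_n(t)=\mathrm{e}^{-n^{2m}}\exp\!\Big[-\bigl(1+\tfrac{\mu^2}{2}\bigr)n^{2m}t+\mu(-1)^{m/2}n^{m}w_t\Big].
\]

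Next I would extract a lower bound for the $L^2$ norm by keeping just one mode: for every $n$,
\[
\|u(\cdot,t)\|_{L^2(\mathbb{T})}^{p}\geq (2\pi)^{p/2}|\hat{u}_n(t)|^{p}.
\]
Using the lognormal moment formula $\mathbb{E}\exp(\sigma w_t)=\exp(\sigma^{2}t/2)$, a short computation gives
\[
\mathbb{E}|\hat{u}_n(t)|^{p}=\exp\!\Big[p n^{2m}\Big(\frac{(p-1)\mu^{2}-2}{2}t-1\Big)\Big]=\exp\!\Big[p n^{2m}\Big(\frac{\varepsilon t}{2}-1\Big)\Big].
\]
When $t>2/\varepsilon$ the exponent is strictly positive, so $\mathbb{E}|\hat{u}_n(t)|^{p}\to\infty$ as $n\to\infty$, forcing $\mathbb{E}\|u(\cdot,t)\|_{L^{2}(\mathbb{T})}^{p}=+\infty$.

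For the second assertion I would use the reverse direction of Hölder's inequality on the circle: since $p\ge2$, $\|f\|_{L^2}^{p}\leq (2\pi)^{(p-2)/2}\int_{\mathbb{T}}|f|^{p}\,\mathrm{d}x$, and Fubini yields
\[
\mathbb{E}\|u(\cdot,t)\|_{L^2(\mathbb{T})}^{p}\leq (2\pi)^{p/2}\sup_{x\in\mathbb{T}}\mathbb{E}|u(x,t)|^{p},
\]
so the pointwise supremum is also infinite.

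The computation itself is routine; the only genuine point requiring care is the justification that the Fourier expansion does give the (unique) solution, which is already handled by the Krylov--Rozovskii $H^{l}$-solvability cited in Example~\ref{exa:plambda}. That existence lets one legitimately truncate the series, identify the truncations with the coordinatewise geometric Brownian motions above, and pass to the limit, so the main substance of the proof is the explicit moment calculation whose positive exponent appears precisely at the threshold $(p-1)\mu^{2}=2$ predicted by condition \eqref{eq:parbolic}.
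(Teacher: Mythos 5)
Your proof is correct and reaches the same threshold $(p-1)\mu^{2}=2$ as the paper, but it takes a genuinely simpler route to the divergence. Both arguments begin with the same Fourier diagonalisation into independent (in $n$) geometric Brownian motions; the difference lies in how the $p$-th moment of the $L^{2}$-norm is shown to be infinite. The paper keeps the full series, applies Parseval, writes $\mathbb{E}\|u(\cdot,t)\|_{L^{2}}^{p}$ as a Gaussian integral in the variable $y=w_{t}/\sqrt{t}$, extracts the factor $\exp\{\mu^{2}y^{2}/(f(t)/t)\}$ from the sum, and then lower-bounds the remaining sum and integration range carefully so that the integrand behaves like $\exp\{-y^{2}(1-p\mu^{2}t/f(t))/2\}$ and diverges when $1-p\mu^{2}t/f(t)<0$. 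You instead isolate a single mode $n$, use $\|u(\cdot,t)\|_{L^{2}(\mathbb{T})}^{2}\ge 2\pi|\hat u_{n}(t)|^{2}$, and then apply the closed-form lognormal moment $\mathbb{E}\exp\{\sigma w_{t}\}=\exp\{\sigma^{2}t/2\}$ to obtain $\mathbb{E}|\hat u_{n}(t)|^{p}=\exp\{pn^{2m}(\varepsilon t/2-1)\}$, which blows up as $n\to\infty$ once $t>2/\varepsilon$. Your single-mode calculation avoids all of the Gaussian-integral bookkeeping and exhibits the critical exponent $(p-1)\mu^{2}-2$ transparently; the paper's sum-over-all-modes route yields the same conclusion but requires more delicate pointwise bounds inside the integral. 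Your argument for the second assertion (the H\"older/Fubini comparison $\mathbb{E}\|u(\cdot,t)\|_{L^{2}(\mathbb{T})}^{p}\le(2\pi)^{p/2}\sup_{x}\mathbb{E}|u(x,t)|^{p}$) is the natural deduction the paper leaves implicit, and it is valid.
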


The proof of Lemma \ref{lem:sharpness} is presented in Section 6.
This result indicates that the coefficient $p-1$ in the even case of the condition \eqref{eq:parbolic}
couldn't get any smaller if one wants to always ensure the finiteness of $\sup_{x}\mathbb{E}|u(x,t)|^{p}$, and this, of course, is a basic requirement in our theory.

Although our main result, Theorem \ref{thm:2} below, is stated (and also proved) only for linear equations of form (\ref{eq:dq}), we point out that it is not difficult to extend it to the semilinear case where $f$ and $g$ depend on the unknown $u$ and are Lipschitz continuous with respect to all $D^\alpha u$ with $|\alpha|<2m$ and to all $D^\beta u$ with $|\beta|<m$, respectively. 
Besides, it is also interesting to ask if the coercivity condition \eqref{eq:parbolic} is
sufficient or not to construct an $L^p$-theory for Equation (1.1).

Our approach to Schauder estimates, following the strategy used in \cite{du2019cauchy,du2017stochastic}, combines a perturbation
scheme of Wang \cite{wang2006schauder} with some integral-type
estimates that were also used in \cite{trudinger1986new}. 
The effect of the $p$-dependent condition \eqref{eq:parbolic}
can be seen in the proof of the mixed norm estimates (Lemma~\ref{lem:the whole space estimate});
the latter leads to a local boundedness estimate that plays a key
role in proving the fundamental interior Schauder estimate for the model equation 
(see~\eqref{eq:model} below).

This paper is organised as follows. 
In the next section we state our main theorem
after introducing some
notation and assumptions. 
Sections 3 and 4
are both devoted to the estimates for the model equation 
whose coefficients depend on $t$ and $\omega$ but not on
$x$; we prove some auxiliary estimates in Section~3,
and establish
the interior H\"{o}lder estimate in Section~4.
The proof of the main theorem is completed in Section~5. 
In the final section we prove Lemma~\ref{lem:sharpness}.

\section{Notation and main results}

Before stating the main results, we introduce some notation
and the working spaces. For a function $f$ of $x=(x_{1},\ldots,x_{n})\in\mathbb{R}^{n}$
and a multi-index $\beta=(\beta_{1},\ldots,\beta_{n})\in\mathbb{N}^{n}$,
we define
\[
D^{\beta}f=\frac{\partial^{\beta_{1}}\cdots\partial^{\beta_{n}}}{\partial x_{1}^{\beta_{1}}\cdots\partial x_{n}^{\beta_{n}}}f,\;\left|\beta\right|=\beta_{1}+\cdots+\beta_{n}.
\]
For $k\in\mathbb{N}=\{0,1,2,\dots\}$, $D^{k}f$ is regarded as the set of all $k$-order
derivatives of $f$ and $\Vert D^{k}f\Vert_{E}={\sum_{|\alpha|=k}\Vert D^{\alpha}f\Vert_{E}}$
where $\Vert\cdot\Vert_{E}$ is the norm of a normed space $E$. All
the derivatives of $E$-valued functions are defined with respect
to the spatial variables in the strong sense as in \cite{phillips1957functional}.

A Banach space-valued H\"{o}lder continuous function
is a natural extension of the classical H\"{o}lder continuous function.
Let $E$ be a Banach space, $\mathcal{O}$ be a domain in $\mathbb{R}^{n}$,
$I\subset\mathbb{R}$ be an interval, and $Q\coloneqq\mathcal{O}\times I$.
For a function $h:\,\mathcal{O}\rightarrow E$, we define
\begin{eqnarray*}
 &  & \left|h\right|_{k;\mathcal{O}}^{E}\coloneqq\max_{\left|\beta\right|\leq k}\sup_{x\in\mathcal{O}}\left\Vert D^{\beta}h(x)\right\Vert _{E},\\
 &  & \left[h\right]_{k+\hd;\mathcal{O}}^{E}\coloneqq\max_{\left|\beta\right|=k}\sup_{x,y\in\mathcal{O},x\neq y}\frac{\left\Vert D^{\beta}h(x)-D^{\beta}h(y)\right\Vert _{E}}{\left|x-y\right|^{\hd}},\\
 &  & \left|h\right|_{k+\hd;\mathcal{O}}^{E}\coloneqq\left|h\right|_{k;\mathcal{O}}^{E}+\left[h\right]_{k+\hd;\mathcal{O}}^{E}.
\end{eqnarray*}
with $k\in\mathbb{N}$ and $\alpha\in(0,1)$. For a function
$u:\,Q\rightarrow E$, we define
\begin{eqnarray*}
\left[u\right]_{k+\hd;Q}^{E}\coloneqq\sup_{t\in I}\left[u(\cdotp,t)\right]_{k+\hd;\mathcal{O}}^{E}, &  & \left|u\right|_{k+\hd;Q}^{E}\coloneqq\sup_{t\in I}\left|u(\cdotp,t)\right|_{k+\hd;\mathcal{O}}^{E}.
\end{eqnarray*}
Moreover, we define the parabolic modulus $\left|X\right|_{\mathrm{p}}=\left|(x,t)\right|_{\mathrm{p}}\coloneqq\left|x\right|+\left|t\right|^{\frac{1}{2m}}$
and 
\begin{eqnarray*}
 &  & \left[u\right]_{(k+\hd,\hd/2m);Q}^{E}\coloneqq\max_{\left|\beta\right|=k}\sup_{X,Y\in Q,X\neq Y}\frac{\left\Vert D^{\beta}u(X)-D^{\beta}u(Y)\right\Vert _{E}}{\left|X-Y\right|_{\mathrm{p}}^{\hd}},\\
 &  & \left|u\right|_{(k+\hd,\hd/2m);Q}^{E}\coloneqq\left|u\right|_{k;Q}^{E}+\left[u\right]_{(k+\hd,\hd/2m);Q}^{E}.
\end{eqnarray*}
In this paper, $E$ is either i) $\mathbb{R}$, ii) $l^{2}$ or iii)
$L_{\omega}^{p}\coloneqq L^{p}(\Omega)$. We omit the superscript
in cases i) and ii), and in case iii) we denote
\[
\interleave\cdotp\interleave_{\ldots}\coloneqq\left|\cdotp\right|_{\ldots}^{L_{\omega}^{p}},\;\left\llbracket \cdotp\right\rrbracket _{\ldots}\coloneqq\left[\cdotp\right]_{\ldots}^{L_{\omega}^{p}}
\]
for simplicity.
\begin{definition}
The H\"{o}lder-type spaces $C_{x}^{k+\hd}(Q;L_{\omega}^{p})$ and
$C_{x,t}^{k+\hd,\hd/2m}(Q;L_{\omega}^{p})$ are defined as all predictable
random fields $u$ defined on $Q\times\Omega$ and taking values in
an Euclidean space or $l^{2}$ such that $u(\cdot,t)$ is an $L_{\omega}^{p}$-valued
strongly continuous function for each $t$, and $\interleave u\interleave_{k+\hd;Q}$
and $\interleave u\interleave_{(k+\hd,\hd/2m);Q}$ are finite respectively.
\end{definition}

Obviously, a function $u$ in $C_{x}^{k+\hd}(Q;L_{\omega}^{p})$ means that itself and
its spatial derivatives up to order $k$ lie in the space $\mathcal{C}_{p}^{\delta}$
defined in the previous section.

In this paper we adopt a concept of quasi-classical solutions introduced in \cite{du2019cauchy}.
\begin{definition}
A predictable random field \emph{u} is called a \emph{quasi-classical
}solution of (\ref{eq:dq}) if

(i) for each $t\in(0,\infty)$, $u(\cdot,t)$ is an $2m$ times strongly
differentiable function from $\mathbb{R}^{n}$ to $L_{\omega}^{p}$
for some $p\geq2$; and 

(ii) for each $x\in\mathbb{R}^{n}$, the process $u(x,\cdot)$ is
stochastically continuous and satisfies the integral equation
\begin{eqnarray}
u(x,T_{1})-u(x,T_{0})
 & = & \int_{T_{0}}^{T_{1}}\bigg[-(-1)^{m}\sum_{|\alpha|,\left|\beta\right|\leq m}A_{\alpha\beta}D^{\alpha+\beta}u(x,t)+f(x,t)\bigg]\mathrm{d}t\\
 &  & +\int_{T_{0}}^{T_{1}}\sum_{k=1}^{\infty}\bigg[\sum_{|\alpha|\leq m}B_{\alpha}^{k}D^{\alpha}u(x,t)+g^{k}(x,t)\bigg]\mathrm{d}w_{t}^{k}\nonumber 
\end{eqnarray}
almost surely (a.s.) for all $0\leq T_{0}<T_{1}<\infty$.
\end{definition}

In particular, if $u(\cdot,t,\omega)\in C^{2m}(\mathbb{R}^{n})$ for
each $(t,\omega)\in [0,\infty)\times\Omega$, then $u$ is a \emph{classical
}solution of (\ref{eq:dq}).

Next we will introduce some notations for the domains:
\[
B_{r}(x)\coloneqq\{y\in\mathbb{R}^{n}:|y-x|<r\},\:Q_{r}(x,t)\coloneqq B_{r}(x)\times(t-r^{2m},t]
\]
and simply write $B_{r}\coloneqq B_{r}(0),$ $Q_{r}\coloneqq Q_{r}(0,0)$.
Also we denote 
\[
\mathcal{Q}_{r,T}(x)\coloneqq B_{r}(x)\times(0,T],\:\mathcal{Q}_{r,T}\coloneqq\mathcal{Q}_{r,T}(0),\:\mathcal{Q}_{T}\coloneqq\mathbb{R}^{n}\times(0,T].
\]

\begin{assumption}\label{ass}
The following conditions hold throughout the paper unless otherwise
stated:
\begin{itemize}
\item[1)] The coercivity condition \eqref{eq:parbolic} is satisfied with some $\lambda > 0$ 
and $p\ge 2$.
\item[2)] The random fields $A_{\alpha\beta}$ and $f$ are real-valued, and $B_{\alpha}$ and
$g$ are $l^{2}$-valued; all of them are predictable. 
The classical
$C_{x}^{\hd}$-norms of $A_{\alpha\beta}(\cdot,t,\omega)$ and $C_{x}^{m+\hd}$-norms
of $B_{\alpha}(\cdot,t,\omega)$ are all dominated by a constant $K>0$ uniformly in $(t,\omega)$.
\item[3)] The free terms $f\in C_{x}^{\hd}(\mathcal{Q}_{T};L_{\omega}^{p})$ and
$g\in C_{x}^{m+\hd}(\mathcal{Q}_{T};L_{\omega}^{p})$.
\end{itemize}
\end{assumption}

Now we are ready to state the main result in this paper which consists
of the global H\"{o}lde estimate and the solvability.
\begin{theorem}
\emph{\label{thm:2}}Under Assumptions \ref{ass}, there
exists a unique quasi-classical solution $u \in C_{x,t}^{2m+\hd,\hd/2m}(\mathcal{Q}_{T};L_{\omega}^{p})$ to Equation~(\ref{eq:dq}) with the
initial condition $u(\cdot,0)=0$. Moreover, there is a constant
$C>0$ depending only on $n$, m, $\lambda$, p, $\hd$ and K such that
\emph{
\begin{equation}
\interleave u\interleave{}_{(2m+\hd,\hd/2m);\mathcal{Q}_{T}}\leq Ce^{CT}(\interleave f\interleave_{\hd;\mathcal{Q_{T}}}+\interleave g\interleave_{m+\hd;\mathcal{Q}_{T}}).\label{eq:global estimate}
\end{equation}
}
\end{theorem}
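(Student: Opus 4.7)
The strategy is the Campanato--Wang perturbation scheme executed in the stochastic Hölder spaces $\mathcal{C}_p^\hd$: first establish all estimates for a model equation whose coefficients depend only on $(t,\omega)$, then pass to variable coefficients by freezing, and finally conclude existence and uniqueness by the method of continuity using the a priori estimate \eqref{eq:global estimate}.

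\emph{Step 1 (model equation, mixed-norm estimate).} Assume $A_{\alpha\beta}$ and $B_{\alpha}$ depend only on $(t,\omega)$. The key is to apply Itô's formula to $\int |D^m u(x,t)|^p\,\md x$ (after mollification and truncation to justify the needed $L^p$-differentiability). The drift term, after integration by parts in $x$, produces a quadratic form in $D^{2m}u$ carrying the factor $(-1)^m\cdot p$ in front of $\sum_{|\alpha|=|\beta|=m}A_{\alpha\beta}\xi_\alpha\xi_\beta$, while the Itô correction contributes $\tfrac{p(p-1)}{2}\sum_k\bigl|\sum_{|\alpha|=m} B_\alpha^k D^{\alpha+\text{(test)}}u\bigr|^2$. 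For even $m$ the two contributions have the \emph{same} sign and the coercivity must absorb the full $(p-1)$-multiple of the noise term, whereas for odd $m$ they have opposite signs and the naive factor $1$ suffices. This is exactly what \eqref{eq:parbolic} provides, yielding an $L^p(\Omega\times\mathcal{Q}_T)$-bound on $D^{2m}u$ and hence, via Sobolev embedding in $x$ and BDG in $t$, a local boundedness estimate for the model equation.

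\emph{Step 2 (interior Hölder estimate for the model equation).} Following Wang's perturbation idea, on a parabolic cylinder $Q_r$ I would approximate $u$ by the space-time Taylor polynomial $P$ of $x$-degree $2m$ built from $D^\alpha u(0,0)$ and its time increments; the residual $v:=u-P$ satisfies the same equation with modified (but still controlled) free terms, and the mixed-norm estimate of Step~1 applied to $v$ on $Q_{r/2}$ combined with Campanato-type integral characterisations yields
\[
\interleave u-P\interleave_{0;Q_{r/2}}\le C r^{2m+\hd}\bigl(\interleave f\interleave_{\hd;Q_r}+\interleave g\interleave_{m+\hd;Q_r}\bigr).
\]
Iterating this decay estimate at dyadic scales produces $\llbracket D^{2m}u\rrbracket_{(\hd,\hd/2m);Q_{r}}$ control, which is the fundamental interior Schauder estimate for the model equation.

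\emph{Step 3 (variable coefficients and global bound).} For the general equation, freeze the coefficients at a reference point $(x_0,t_0)$, rewrite \eqref{eq:dq} as a model equation plus a perturbation whose coefficients are $O(|x-x_0|^\hd)$ in the relevant norm (using Assumption \ref{ass}(2)), and insert the Step~2 estimate into the standard Campanato iteration; the Hölder smallness of the coefficient oscillation absorbs the leading-order error at small scales. A covering argument in $x$ together with a time-slice patching (Gronwall in $t$, yielding the factor $e^{CT}$) upgrades the local estimate to the global bound \eqref{eq:global estimate}, using the zero initial condition to handle the lower boundary $t=0$.

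\emph{Step 4 (solvability) and main obstacle.} Once the a priori estimate is in hand, existence and uniqueness follow by the method of continuity along the family $\md u=[\tau L_A u+f]\md t+\sum_k[\tau L_B^k u+g^k]\md w^k_t$, $\tau\in[0,1]$, since at $\tau=0$ the equation is explicitly solvable as a stochastic convolution with the heat-type kernel. The genuine difficulty concentrates in Step~1: making the integration-by-parts/Itô argument rigorous for the functional $u\mapsto\int|D^m u|^p\md x$ in the non-Hilbertian regime $p>2$, and tracking the precise sign that forces the $p$-dependent form of \eqref{eq:parbolic}. The sharpness demonstrated by Lemma~\ref{lem:sharpness} shows that no weaker coercivity constant can survive this step.
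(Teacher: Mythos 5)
Your Steps 2--4 match the paper's high-level architecture (Wang-type perturbation, Campanato iteration at dyadic scales, coefficient freezing with Gronwall in time, and the method of continuity), and Step 2's Dirichlet-approximation flavour is close to what the paper actually does in Theorem~\ref{thm:dini}. However, Step~1 --- which you correctly flag as the crux --- is set up with the wrong functional and the wrong mechanism, and Step~4's homotopy cannot start.

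\textbf{Step 1 is wrong in substance.} You propose applying It\^o to $\int|D^m u(x,t)|^p\,\md x$ and explain the even/odd dichotomy by a sign flip $(-1)^m p$ in the drift's quadratic form that either cancels or reinforces the It\^o correction $\frac{p(p-1)}{2}$. This cannot be right: the $(-1)^{m+1}$ in \eqref{eq:dq} is chosen precisely so that, after $m$ integrations by parts, the drift always produces a \emph{dissipative} (negative) quadratic form, independent of the parity of $m$; if the drift flipped sign for even $m$ the deterministic part of the equation would not be parabolic. Moreover, for the $L^p_x$ functional $\int|D^m u|^p\,\md x$ with $p>2$ the presence of the weight $|D^m u|^{p-2}$ obstructs the integration by parts and you do not get a clean coercive form. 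The paper instead applies It\^o to $\|u(\cdot,t)\|^2_{L^2_x}$ and then to its $p/2$-th power (equivalently $\|u\|^p_{L^2_x}$), as in \eqref{eq:2 order} and \eqref{eq:p order}. The dichotomy comes from the \emph{martingale} term of $\md\|u\|^2_{L^2_x}$: the cross integral
\[
\int_{\mathbb{R}^n} 2u\,\Big(\sum_{|\alpha|=m}B^k_\alpha D^\alpha u\Big)\,\md x
\]
vanishes identically when $m$ is odd (an odd number of integrations by parts gives minus itself, cf.\ \eqref{eq:002}), so the martingale carries only the $g$-contribution and the quadratic variation does not see $B_\alpha D^\alpha u$. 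When $m$ is even it does \emph{not} vanish; then the drift's quadratic variation term already contributes a factor $1$ and the second-order It\^o correction of $\|u\|^{p}_{L^2_x}$ contributes, via Cauchy--Schwarz as in \eqref{eq:small}, an additional factor $p-2$, giving exactly the $(p-1)$ in \eqref{eq:parbolic}. Your explanation attributes this to a sign cancellation in the drift and would not survive the computation.

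\textbf{Step 4 cannot be started as written.} You propose the homotopy $\md u=[\tau L_A u+f]\md t+\sum_k[\tau L^k_B u+g^k]\md w^k_t$, yet claim that at $\tau=0$ the problem is solved by a heat-type kernel. At $\tau=0$ your equation is $\md u = f\,\md t + \sum_k g^k\,\md w^k_t$, which has \emph{no} elliptic operator, is not parabolic, and does not satisfy the a priori estimate \eqref{eq:global estimate} uniformly in $\tau$ (the coercivity constant degenerates). The method of continuity requires a homotopy along which the estimate holds with a uniform constant. The paper interpolates the \emph{operator}, keeping a background $2m$-order elliptic part: $L_s = sL+(1-s)\Delta^{2m}$ and $\Lambda^k_s=s\Lambda^k$, so at $s=0$ one has a genuine deterministic $2m$-order heat equation with additive noise, which is solvable, and the coercivity constant is uniform in $s\in[0,1]$. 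You should replace your homotopy by this one (or any other that preserves uniform parabolicity).
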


In the proof of Theorem \ref{thm:2} the global H\"{o}lde
estimate \eqref{eq:global estimate} is derived first, and then the existence and uniqueness of solutions of Equation (\ref{eq:dq}) is obtained by the standard method of continuity. 

We remark that the Cauchy problem with nonzero initial condition can be reduced into
the case of zero initial condition by some simple calculation. 
Also, if $p$ is large enough one can obtain a modification
of the solution that is H\"{o}lder continuous
jointly in space and time by means of the Kolmogorov
continuity theorem (see \cite{dalang2007hitting} for example).

\section{Auxillary estimates for the model equation}

In Sections 3 and 4 we always assume that 
the coefficients $A_{\alpha\beta}$ and $B_{\alpha}$ with $|\alpha|=|\beta|=m$ are all bounded predictable processes (dominated by the constant $K$),
independent of the spatial variable $x$, and satisfy the coercivity condition \eqref{eq:parbolic}.
Consider the following model equation
\begin{align}
\mathrm{d}u(x,t)= & \bigg[-(-1)^{m}\sum_{|\alpha|=\left|\beta\right|=m}A_{\alpha\beta}(t)D^{\alpha+\beta}u(x,t)+f(x,t)\bigg]\mathrm{d}t\nonumber\\
 & +\sum_{k=1}^\infty\bigg[\sum_{|\alpha|=m}B_{\alpha}^{k}(t)D^{\alpha}u(x,t)+g^{k}(x,t)\bigg]\mathrm{d}w_{t}^{k} \label{eq:model}
\end{align}
with $(x,t)\in\mathbb{R}^{n}\times[-1,+\infty).$

Let $\mathcal{O\subset\mathbb{R}}^{n}$ and $H^{k}(\mathcal{O})=W^{k,2}(\mathcal{O})$
be the usual Sobolev spaces. Let $I\subset\mathbb{R}$ and $Q=\mathcal{O}\times I$.
For $p,q\in[1,\infty]$,
define 
\[
L_{\omega}^{p}L_{t}^{q}H_{x}^{m}(Q):=L^{p}(\varOmega;L^{q}(I;H^{m}(\mathcal{O}))),
\]
and the domain $Q$ in the notation will be often omitted if there is no confusion.

\begin{lemma}
\label{lem:the whole space estimate}Let $\mathcal{Q}_{T}=\mathbb{R}^{n}\times[0,T]$,
$p\geq2$ and the integer $l\ge m$. Suppose $f\in L_{\omega}^{p}L_{t}^{2}H_{x}^{l-m}(\mathcal{Q}_{T})$
and $g\in L_{\omega}^{p}L_{t}^{2}H_{x}^{l}(\mathcal{Q}_{T})$. Then Equation (\ref{eq:model})
with zero initial value admits a unique weak solution $u\in L_{\omega}^{p}L_{t}^{\infty}H_{x}^{l}(\mathcal{Q}_{T})\cap L_{\omega}^{p}L_{t}^{2}H_{x}^{l+m}(\mathcal{Q}_{T})$.
Moreover, for any multi-index $\beta$ such that $|\beta|\leq l$,
\begin{equation}
\left\Vert D^{\beta}u\right\Vert {}_{L_{\omega}^{p}L_{t}^{\infty}L_{x}^{2}}+\left\Vert D^{\beta}D^{m}u\right\Vert {}_{L_{\omega}^{p}L_{t}^{2}L_{x}^{2}}\leq C(\left\Vert D^{\beta}f\right\Vert {}_{L_{\omega}^{p}L_{t}^{2}H_{x}^{-m}}+\left\Vert D^{\beta}g\right\Vert {}_{L_{\omega}^{p}L_{t}^{2}L_{x}^{2}})\label{eq:p estimate}
\end{equation}
where the constant $C$ depends only on $n$, $p$, $m$, $T$, $\lambda$, and $K$.
\end{lemma}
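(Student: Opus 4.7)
The plan is to first reduce to the case $\beta=0$ using translation invariance of the coefficients in $x$, then settle $p=2$ via the classical variational framework, and finally upgrade to $p>2$ by It\^{o}'s formula applied to a power of $\|u(\cdot,t)\|_{L^2}^{2}$. Since $A_{\alpha\beta}$ and $B_{\alpha}^{k}$ are independent of $x$, differentiating \eqref{eq:model} in $x$ shows that $D^{\beta}u$ satisfies the same equation with $f,g$ replaced by $D^{\beta}f,D^{\beta}g$, so it suffices to prove \eqref{eq:p estimate} for $\beta=0$ and then apply it at every order $|\beta|\le l$. For $p=2$, both cases of \eqref{eq:parbolic} reduce to the classical \eqref{eq:parbolic1}, and the Krylov--Rozovskii variational theorem~\cite{krylov1979stochastic} applied in the Gelfand triple $H^{l+m}(\mathbb{R}^{n})\hookrightarrow H^{l}(\mathbb{R}^{n})\hookrightarrow H^{l-m}(\mathbb{R}^{n})$ yields existence, uniqueness, and the $L_{\omega}^{2}$ version of the bound.

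For $p>2$ I would derive an a priori estimate by applying It\^{o}'s formula to $\phi(t)^{p/2}$, where $\phi(t):=\|u(\cdot,t)\|_{L^2}^{2}+\varepsilon$ and $\varepsilon>0$ is eventually sent to zero. Integration by parts in $x$ makes the drift of $\md\|u\|_{L^2}^{2}$ equal to
\begin{equation*}
-2\!\!\!\sum_{|\alpha|=|\beta|=m}\!\!\!A_{\alpha\beta}(D^{\alpha}u,D^{\beta}u)_{L^2}+\sum_{k}\|\Phi^{k}\|_{L^2}^{2}+2(u,f)_{L^2},\quad \Phi^{k}:=\sum_{|\alpha|=m}B_{\alpha}^{k}D^{\alpha}u+g^{k},
\end{equation*}
while the It\^{o}-correction adds an extra drift $\tfrac{p(p-2)}{2}\phi^{p/2-2}\sum_{k}(u,\Phi^{k})_{L^2}^{2}\,\md t$. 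The crux is matching this correction with the coercivity. For odd $m$, integration by parts forces $(u,D^{\alpha}u)_{L^2}=(-1)^{m}(D^{\alpha}u,u)_{L^2}=-(u,D^{\alpha}u)_{L^2}=0$ whenever $|\alpha|=m$, so $(u,\sum_{\alpha}B_{\alpha}^{k}D^{\alpha}u)_{L^2}=0$ and the correction reduces to lower-order terms in $g$; consequently only the $p=2$ form of coercivity is needed. For even $m$, Cauchy--Schwarz yields $(u,\Phi^{k})_{L^2}^{2}\le\phi\,\|\Phi^{k}\|_{L^2}^{2}$, and combined with the unit weight already present in $\sum_{k}\|\Phi^{k}\|_{L^2}^{2}$ the total coefficient in front of $\sum_{k}\|\sum_{\alpha}B_{\alpha}^{k}D^{\alpha}u\|_{L^2}^{2}$ becomes $1+(p-2)=p-1$, which is exactly the factor that the even case of \eqref{eq:parbolic} absorbs to produce a term $-\lambda\|D^{m}u\|_{L^2}^{2}$.

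Once the coercivity has produced $-\lambda\|D^{m}u\|_{L^2}^{2}$ in the drift of $\phi^{p/2}$, the remaining lower-order cross-terms involving $f$ and $g$ are handled by duality and Young's inequality: e.g.\ $|(u,f)_{L^2}|\le\delta\|D^{m}u\|_{L^2}^{2}+C_{\delta}\|f\|_{H^{-m}}^{2}$, with $\delta$ small enough to be absorbed into $\tfrac{\lambda}{2}\|D^{m}u\|_{L^2}^{2}$, and the residuals are controlled by $\phi^{p/2}$ plus $p$-th powers of $\|f\|_{H^{-m}}$ and $\|g\|_{L^2}$. Taking expectations kills the martingale (via standard localisation), and Gronwall's lemma bounds $\sup_{t}\mathbb{E}\phi^{p/2}(t)+\mathbb{E}\int_{0}^{T}\phi^{p/2-1}\|D^{m}u\|_{L^2}^{2}\,\md t$ by the right-hand side of \eqref{eq:p estimate}. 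The $L_{\omega}^{p}L_{t}^{\infty}$ norm is then obtained by Burkholder--Davis--Gundy applied to the stochastic integral in $\md\phi^{p/2}$, with Young's inequality used to absorb a resulting $\tfrac{1}{2}\mathbb{E}\sup_{t}\phi^{p/2}$ back onto the left. Letting $\varepsilon\downarrow 0$ gives the a priori bound for $\beta=0$; iterating at every $|\beta|\le l$ completes it. Existence for $p>2$ then follows by approximating $f,g$ with smooth compactly supported processes, applying $L^{2}$-existence, and passing to the limit with the $L^{p}$ a priori bound; uniqueness is immediate from the bound on differences. The main obstacle is the bookkeeping in the It\^{o}-correction step, where the $p$-dependent coercivity has to emerge cleanly from the interplay of the quadratic variation and the Cauchy--Schwarz estimate.
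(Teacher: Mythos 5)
Your proposal tracks the paper's proof closely: reduce to $\beta=0$ by differentiation, use the Krylov--Rozovskii variational theory for $p=2$, then apply It\^{o}'s formula to $\|u\|_{L^2}^{p}$ (equivalently your $\phi^{p/2}$) and exploit the parity of $m$ exactly as the paper does --- the vanishing of $(u,D^{\alpha}u)_{L^2}$ for odd $m$, and Cauchy--Schwarz on the It\^{o} correction for even $m$ so that $1+(p-2)=p-1$ emerges as the coefficient the coercivity condition is designed to absorb. The regularization $\phi=\|u\|^2+\varepsilon$ is a harmless variant of the paper's use of the indicator $\mathbf{1}_{\{\|u\|\ne 0\}}$, and treating the odd case with the same $p$-th power It\^{o} instead of the paper's quadratic It\^{o} is also fine.

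There is, however, a genuine gap in the last step. What your Gronwall argument delivers is
\[
\mathbb{E}\sup_{t\le T}\|u(t)\|_{L^2}^{p}+\mathbb{E}\int_{0}^{T}\|u\|_{L^2}^{p-2}\|D^{m}u\|_{L^2}^{2}\,\md t
\;\lesssim\;\mathbb{E}\Big(\int_0^T\big(\|f\|_{H^{-m}}^{2}+\|g\|_{L^2}^{2}\big)\,\md t\Big)^{p/2},
\]
but the lemma requires the $L_\omega^p L_t^2 L_x^2$ norm of $D^m u$, i.e.\ a bound on
\(\mathbb{E}\big(\int_0^T\|D^m u\|_{L^2}^2\,\md t\big)^{p/2}\).
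The mixed-weight quantity $\mathbb{E}\int\|u\|^{p-2}\|D^m u\|^2$ does not dominate this by any pointwise inequality (indeed H\"older goes the wrong way). The paper closes this by returning to the quadratic It\^{o} identity for $\|u(\cdot,t)\|_{L^2}^2$, integrating in time, raising to the power $p/2$, and applying BDG; the resulting cross-term $\mathbb{E}\big(\int\|u\|^2(\|D^m u\|^2+\|g\|^2)\big)^{p/4}$ is then absorbed using the already-established bound on $\mathbb{E}\sup_t\|u\|^p$. You need this extra pass through the second-order It\^{o} formula; without it, the second half of \eqref{eq:p estimate} is not proved. Apart from this omission the argument is sound.
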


\begin{proof}
If $p=2$, the existence and uniqueness of the weak solution has already
been obtained in \cite{krylov1979stochastic} and \cite{rozovskii1990stochastic}.
So it remains to prove estimate (\ref{eq:p estimate}) for general
$p\geq2.$ Since we can differentiate (\ref{eq:model}) with order
$\beta$, it suffices to prove the estimate in the case $|\beta|=0$ .

By an It\^{o} formula from \cite[Theorem~1.3.1]{krylov1979stochastic},
one can derive 
\begin{eqnarray}
 &  & \mathrm{d}\Vert u(\cdot,t)\Vert_{L_{x}^{2}}^{2}\label{eq:2 order}\\
 & = & {\displaystyle \int_{\mathbb{R}^{n}}[-2\sum_{|\alpha|=|\beta|=m}A_{\alpha\beta}D^{\alpha}uD^{\beta}u+2uf+\sum_{k=1}^{\infty}|\sum_{|\alpha|=m}B_{\alpha}^{k}D^{\alpha}u+g^{k}|^{2}]\mathrm{d}x\mathrm{d}t}\nonumber \\
 &  & +\sum_{k=1}^{\infty}\int_{\mathbb{R}^{n}}2u(\sum_{|\alpha|=m}B_{\alpha}^{k}D^{\alpha}u+g^{k})\mathrm{d}x\mathrm{d}w_{t}^{k}.\nonumber 
\end{eqnarray}
Note that in the last term one has
\begin{equation}\label{eq:002}
\int_{\mathbb{R}^{n}}2u\Big(\sum_{|\alpha|=m}B_{\alpha}^{k}D^{\alpha}u\Big)\mathrm{d}x=0
\quad\text{when }m \text{ is odd,}
\end{equation}
but it is not true for even $m$. 

Take a stopping
time $\tau$ such that
\[
\mathbb{E}\left[\bigg(\sup_{t\in[0,\tau]}\Vert u(t)\Vert_{L_{x}^{2}}^{2}+\int_{0}^{\tau}\Vert D^{m}u(t)\Vert_{L_{x}^{2}}^{2}\mathrm{d}t\bigg)^{\frac{p}{2}}\right]<+\infty.
\]
Let us consider two cases:

{\bf Case 1.} $m$ is \emph{odd}. Using the fact \eqref{eq:002}, and by Condition (\ref{eq:parbolic}),
the Sobolev--Gagliargo--Nirenberg inequality and Young's inequality, we
have 
\begin{eqnarray*}
 &  & \sup_{t\in[0,\tau]}\left\Vert u(t)\right\Vert _{L_{x}^{2}}^{2}+\int_{0}^{\tau}\left\Vert D^{m}u(t)\right\Vert _{L_{x}^{2}}^{2}\mathrm{d}t\\
 & \leq & C\int_{0}^{\tau}\big[\left\Vert f(t)\right\Vert _{H_{x}^{-m}}^{2}+\left\Vert g(t)\right\Vert _{L_{x}^{2}}^{2}\big]\mathrm{d}t+C\left|\sup_{t\in[0,\tau]}\sum_k\int_{0}^{t}\int_{\mathbb{R}^{n}}2ug^{k}\mathrm{d}x\mathrm{d}w_{s}^{k}\right|.
\end{eqnarray*}
Then computing $\mathbb{E}[\cdot]^{p/2}$ on both sides of the above inequality, and using the Burkholder--Davis--Gundy (BDG)
inequality and Young's inequality, one can obtain that
\begin{eqnarray}\label{eq:001}
\mathbb{E}{\displaystyle \sup_{t\in[0,\tau]}\Vert u(t)\Vert_{L_{x}^{2}}^{p}}+\mathbb{E}\left(\int_{0}^{\tau}\Vert D^{m}u\Vert_{L_{x}^{2}}^{2}\mathrm{d}t\right)^{\frac{p}{2}}
\leq  C\mathbb{E}\left|\int_{0}^{\tau}(\Vert f\Vert_{H_{x}^{-m}}^{2}+\Vert g\Vert_{L_{x}^{2}}^{2})\mathrm{d}t\right|^{\frac{p}{2}},
\end{eqnarray}
where the constant $C$ depends only on $n$, $p$, $m$, $T$, and $\lambda$.

{\bf Case 2.} $m$ is \emph{even}. Applying
It\^{o}'s formula to $\Vert u(\cdot,t)\Vert_{L_{x}^{2}}^{p}$, one
can derive
\begin{eqnarray}
 &  & \mathrm{d}\Vert u(\cdot,t)\Vert_{L_{x}^{2}}^{p}\label{eq:p order}\\
 & = & \frac{p}{2}\Vert u\Vert_{L_{x}^{2}}^{p-2}\int_{\mathbb{R}^{n}} \bigg[\Big(-2\sum_{|\alpha|=|\beta|=m}A_{\alpha\beta}D^{\alpha}uD^{\beta}u+\sum_{k=1}^{+\infty}|\sum_{|\alpha|=m}B_{\alpha}^{k}D^{\alpha}u|^{2}\Big)\nonumber \\
 &  & +2uf+2\sum_{k}\sum_{|\alpha|=m}(B_{\alpha}^{k}D^{\alpha}u)g^{k}+|g|^{2} \bigg]\mathrm{d}x\mathrm{d}t\nonumber \\
 &  & +p\Vert u\Vert_{L_{x}^{2}}^{p-2}\sum_k\int_{\mathbb{R}^{n}}u\Big(\sum_{|\alpha|=m}B_{\alpha}^{k}D^{\alpha}u+g^{k}\Big)\mathrm{d}x\mathrm{d}w_{t}^{k}\nonumber \\
 &  & +\frac{p(p-2)}{2}\textbf{1}_{\{\Vert u\Vert_{L_{x}^{2}}\neq0\}}\Vert u\Vert_{L_{x}^{2}}^{p-4}\sum_{k}\bigg[\int_{\mathbb{R}^{n}}u\Big(\sum_{|\alpha|=m}B_{\alpha}^{k}D^{\alpha}u+g^{k}\Big)\mathrm{d}x\bigg]^{2} \mathrm{d}t.\nonumber 
\end{eqnarray}
With the help of H\"{o}lder inequality, one can obtain
\begin{eqnarray}
 &  & (p-2)\sum_{k}\bigg[\int_{\mathbb{R}^{n}}u\Big(\sum_{|\alpha|=m}B_{\alpha}^{k}D^{\alpha}u+g^{k}\Big)\mathrm{d}x\bigg]^{2} \label{eq:small}\\
 & \leq & (p-2)(1+\epsilon)\sum_{k}\bigg(\int_{\mathbb{R}^{n}}\sum_{|\alpha|=m}uB_{\alpha}^{k}D^{\alpha}u\mathrm{d}x\bigg)^{2}+C(\epsilon,p,m)\sum_{k}\bigg(\int_{\mathbb{R}^{n}}ug^{k}\mathrm{d}x\bigg)^{2}\nonumber \\
 & \leq & (1+\epsilon)(p-2)\Vert u\Vert_{L_{x}^{2}}^{2}\text{\ensuremath{\cdot}}\sum_{k}\int_{\mathbb{R}^{n}}\bigg|\sum_{|\alpha|=m}B_{\alpha}^{k}D^{\alpha}u\bigg|^{2}\mathrm{d}x+C\sum_{k}\bigg(\int_{\mathbb{R}^{n}}ug^{k}\mathrm{d}x\bigg)^{2}\nonumber 
\end{eqnarray}
We choose $\epsilon>0$ so small that $(p-2)K\epsilon\leq\lambda/2$.
Then combining with (\ref{eq:parbolic}), (\ref{eq:p order}) (\ref{eq:small})
and Sobolev-Gagliargo-Nirenberg inequality, we have
\begin{eqnarray}
 &  & \mathrm{d}\Vert u(\cdot,t)\Vert_{L_{x}^{2}}^{p}\\
 & \leq & \frac{p}{2}\Vert u\Vert_{L_{x}^{2}}^{p-2} \bigg[-\frac{\lambda}{2}\sum_{|\eta|=m}\Vert D^{\eta}u\Vert_{L_{x}^{2}}^{2}+\Vert u\Vert_{H_{x}^{m}}\Vert f\Vert_{H_{x}^{-m}}+\Vert g\Vert_{L_{x}^{2}}^{2}\nonumber \\
 &  & +\Vert g\Vert_{L_{x}^{2}}\sum_{|\eta|=m}\Vert D^{\eta}u\Vert_{L_{x}^{2}} \bigg]\mathrm{d}t+p\Vert u\Vert_{L_{x}^{2}}^{p-2}\sum_k\int_{\mathbb{R}^{n}}u\Big(\sum_{|\alpha|=m}B_{\alpha}^{k}D^{\alpha}u+g^{k}\Big)\mathrm{d}x\mathrm{d}w_{t}^{k}\nonumber \\
 & \leq & -\varepsilon\Vert u\Vert_{L_{x}^{2}}^{p-2}\Vert D^{m}u\Vert_{L_{x}^{2}}^{2}+C\Vert u\Vert_{L_{x}^{2}}^{p-2}(\Vert u\Vert_{L_{x}^{2}}^{2}+\Vert f\Vert_{H_{x}^{-m}}^{2}+\Vert g\Vert_{L_{x}^{2}}^{2})\nonumber \\
 &  & +p\Vert u\Vert_{L_{x}^{2}}^{p-2}\sum_k\int_{\mathbb{R}^{n}}u\Big(\sum_{|\alpha|=m}B_{\alpha}^{k}D^{\alpha}u+g^{k}\Big)\mathrm{d}x\mathrm{d}w_{t}^{k}.\nonumber 
\end{eqnarray}
For simplicity, we denote $\Vert D^{m}u\Vert_{L_{x}^{2}}=\sum_{|\alpha|=m}\Vert D^{\alpha}u\Vert_{L_{x}^{2}}$.
Integrating with respect to $t$ on interval $[0,s]$ for any $s\in[0,T]$,
we can obtain that
\begin{eqnarray}
 &  & \Vert u(s)\Vert_{L_{x}^{2}}^{p}+\varepsilon\int_{0}^{s}\Vert u\Vert_{L_{x}^{2}}^{p-2}\Vert D^{m}u\Vert_{L_{x}^{2}}^{2}\mathrm{d}t\label{eq:11}\\
 & \leq & C\int_{0}^{s}\Vert u\Vert_{L_{x}^{2}}^{p-2}(\Vert u\Vert_{L_{x}^{2}}^{2}+\Vert f\Vert_{H_{x}^{-m}}^{2}+\Vert g\Vert_{L_{x}^{2}}^{2})\mathrm{d}t\nonumber \\
 &  & +p\sum_k \int_{0}^{s}\Vert u\Vert_{L_{x}^{2}}^{p-2}\int_{\mathbb{R}^{n}}u(\sum_{|\alpha|=m}B_{\alpha}^{k}D^{\alpha}u+g^{k})\mathrm{d}x\mathrm{d}w_{t}^{k},\quad a.s.\nonumber 
\end{eqnarray}
where $\varepsilon=\varepsilon(m,\lambda)>0$.
Choosing the stopping time $\tau$ as before and taking expectation
on both sides of (\ref{eq:11}) and by Gronwall's inequality, one
can derive 
\begin{eqnarray}
 &  & \sup_{t\in[0,T]}\mathbb{E\Vert}u(t\wedge\tau)\Vert_{L_{x}^{2}}^{p}+\mathbb{E}\int_{0}^{\tau}\Vert u\Vert_{L_{x}^{2}}^{p-2}\Vert D^{m}u\Vert_{L_{x}^{2}}^{2}\mathrm{d}t\label{eq:12}\\
 & \leq & C\mathbb{E}\int_{0}^{\tau}\Vert u\Vert_{L_{x}^{2}}^{p-2}(\Vert f\Vert_{H_{x}^{-m}}^{2}+\Vert g\Vert_{L_{x}^{2}}^{2})\mathrm{d}t\nonumber 
\end{eqnarray}
Then we can estimate $\mathbb{E}{\sup_{t\in[0,\tau]}\Vert u(t)\Vert_{L_{x}^{2}}^{p}}$
from (\ref{eq:11}) by the BDG inequality
\begin{eqnarray}
 &  & \mathbb{E}\sup_{t\in[0,\tau]}\Vert u(t)\Vert_{L_{x}^{2}}^{p}+\mathbb{E}\int_{0}^{\tau}\Vert u\Vert_{L_{x}^{2}}^{p-2}\Vert D^{m}u\Vert_{L_{x}^{2}}^{2}\mathrm{d}t\label{eq:13}\\
 & \leq & C\mathbb{E}\int_{0}^{\tau}\Vert u\Vert_{L_{x}^{2}}^{p-2}(\Vert u\Vert_{L_{x}^{2}}^{2}+\Vert f\Vert_{H_{x}^{-m}}^{2}+\Vert g\Vert_{L_{x}^{2}}^{2})\mathrm{d}t\nonumber \\
 &  & +C\mathbb{E}\bigg\{ \int_{0}^{\tau}\Vert u\Vert_{L_{x}^{2}}^{2(p-2)}\sum_{k}\bigg[\int_{\mathbb{R}^{n}}u(\sum_{|\alpha|=m}B_{\alpha}^{k}D^{\alpha}u+g^{k})\mathrm{d}x\bigg]^{2}\mathrm{d}t\bigg\} ^{\frac{1}{2}}.\nonumber 
\end{eqnarray}
The last term of the above inequality is dominated by
\begin{eqnarray*}
 &  & C\mathbb{E}\bigg[\int_{0}^{\tau}\Vert u\Vert_{L_{x}^{2}}^{2(p-2)}(\Vert u\Vert_{L_{x}^{2}}^{2}\Vert D^{m}u\Vert_{L_{x}^{2}}^{2}+\Vert u\Vert_{L_{x}^{2}}^{2}\Vert g\Vert_{L_{x}^{2}}^{2})\mathrm{d}t\bigg]^{\frac{1}{2}}\\
 & \leq & C\mathbb{E}\bigg\{ \sup_{t\in[0,\tau]}\Vert u\Vert_{L_{x}^{2}}^{\frac{p}{2}}\bigg[\int_{0}^{\tau}(\Vert u\Vert_{L_{x}^{2}}^{p-2}\Vert D^{m}u\Vert_{L_{x}^{2}}^{2}+\Vert u\Vert_{L_{x}^{2}}^{p-2}\Vert g\Vert_{L_{x}^{2}}^{2})\mathrm{d}t\bigg]^{\frac{1}{2}}\bigg\} \\
 & \leq & \frac{1}{2}\mathbb{E}{\displaystyle \sup_{t\in[0,\tau]}\Vert u(t)\Vert_{L_{x}^{2}}^{p}}+C\mathbb{E}\int_{0}^{\tau}(\Vert u\Vert_{L_{x}^{2}}^{p-2}\Vert D^{m}u\Vert_{L_{x}^{2}}^{2}+\Vert u\Vert_{L_{x}^{2}}^{p-2}\Vert g\Vert_{L_{x}^{2}}^{2})\mathrm{d}t,
\end{eqnarray*}
which along with (\ref{eq:12}) and (\ref{eq:13}) yields that
\begin{eqnarray*}
\mathbb{E}{\sup_{t\in[0,\tau]}\Vert u(t)\Vert_{L_{x}^{2}}^{p}} & \leq & C\mathbb{E}\int_{0}^{\tau}\Vert u\Vert_{L_{x}^{2}}^{p-2}(\Vert f\Vert_{H_{x}^{-m}}^{2}+\Vert g\Vert_{L_{x}^{2}}^{2})\mathrm{d}t\\
 & \leq & \frac{1}{2}\mathbb{E}{\displaystyle \sup_{t\in[0,\tau]}\Vert u(t)\Vert_{L_{x}^{2}}^{p}}+C\mathbb{E}\bigg[\int_{0}^{\tau}(\Vert f\Vert_{H_{x}^{-m}}^{2}+\Vert g\Vert_{L_{x}^{2}}^{2})\mathrm{d}t\bigg]^{\frac{p}{2}}.
\end{eqnarray*}
Thus we obtain the estimate
\begin{equation}
\mathbb{E}{\displaystyle \sup_{t\in[0,\tau]}\Vert u(t)\Vert_{L_{x}^{2}}^{p}}\leq C\mathbb{E}\bigg[\int_{0}^{\tau}(\Vert f\Vert_{H_{x}^{-m}}^{2}+\Vert g\Vert_{L_{x}^{2}}^{2})\mathrm{d}t\bigg]^{\frac{p}{2}}.\label{eq:lem-result1}
\end{equation}
Next we need to estimate $\mathbb{E}(\int_{0}^{\tau}\Vert D^{m}u\Vert_{L_{x}^{2}}^{2}\mathrm{d}t)^{p/2}$.
Back to (\ref{eq:2 order}) and integrating with respect to time,
one can easily get that
\begin{eqnarray*}
 &  & \Vert u(\tau)\Vert_{L_{x}^{2}}^{2}+\lambda\int_{0}^{\tau}\Vert D^{m}u\Vert_{L_{x}^{2}}^{2}\mathrm{d}t\\
 & \leq & \int_{0}^{\tau}\int_{\mathbb{R}^{n}}\Big(2uf+\sum_{k}\sum_{|\alpha|=m}B_{\alpha}^{k}D^{\alpha}ug^{k}+|g|^{2}\Big)\mathrm{d}x\mathrm{d}t\\
 &  & +\sum_k\int_{0}^{\tau}\int_{\mathbb{R}^{n}}2u\Big(\sum_{|\alpha|=m}B_{\alpha}^{k}D^{\alpha}u+g^{k}\Big)\mathrm{d}x\mathrm{d}w_{t}^{k}.
\end{eqnarray*}
Computing $\mathbb{E}[\cdot]^{p/2}$ on both sides of the
above inequality and by the H\"{o}lder's inequality and BDG inequality,
we derive that
\begin{eqnarray*}
 &  & \mathbb{E}\left(\int_{0}^{\tau}\Vert D^{m}u\Vert_{L_{x}^{2}}^{2}\mathrm{d}t\right)^{\frac{p}{2}}\\
 & \leq & \epsilon\mathbb{E}\left(\int_{0}^{\tau}\Vert u\Vert_{H_{x}^{m}}^{2}\mathrm{d}t\right)^{\frac{p}{2}}+C\mathbb{E}\left[\int_{0}^{\tau}(\Vert f\Vert_{H_{x}^{-m}}^{2}+\Vert g\Vert_{L_{x}^{2}}^{2})\mathrm{d}t\right]^{\frac{p}{2}}\\
 &  & +C\mathbb{E}\bigg\{ \sum_{k}\int_{0}^{\tau}\bigg[\int_{\mathbb{R}^{n}}2u(\sum_{|\alpha|=m}B_{\alpha}^{k}D^{\alpha}u+g^{k})\mathrm{d}x\bigg]^{2}\mathrm{d}t\bigg\} ^{\frac{p}{4}}\\
 & \leq & \epsilon\mathbb{E}\left(\int_{0}^{\tau}\Vert u\Vert_{H_{x}^{m}}^{2}\mathrm{d}t\right)^{\frac{p}{2}}+C\mathbb{E}\left[\int_{0}^{\tau}(\Vert f\Vert_{H_{x}^{-m}}^{2}+\Vert g\Vert_{L_{x}^{2}}^{2})\mathrm{d}t\right]^{\frac{p}{2}}\\
 &  & +C\mathbb{E}\left[\int_{0}^{\tau}\Vert u\Vert_{L_{x}^{2}}^{2}(\Vert D^{m}u\Vert_{L_{x}^{2}}^{2}+\Vert g\Vert_{L_{x}^{2}}^{2})\mathrm{d}t\right]^{\frac{p}{4}}\\
 & \leq & 2\epsilon\mathbb{E}\left(\int_{0}^{\tau}\Vert D^{m}u\Vert_{L_{x}^{2}}^{2}\mathrm{d}t\right)^{\frac{p}{2}}+C\mathbb{E}\left[\int_{0}^{\tau}(\Vert f\Vert_{H_{x}^{-m}}^{2}+\Vert g\Vert_{L_{x}^{2}}^{2})\mathrm{d}t\right]^{\frac{p}{2}}\\
 &  & +C\mathbb{E}{\displaystyle \sup_{t\in[0,\tau]}\Vert u(t)\Vert_{L_{x}^{2}}^{p}}
\end{eqnarray*}
which along with (\ref{eq:lem-result1}) implies
the estimate \eqref{eq:001} in this case.
Here the constant $C$ further depends on $K$.
\smallskip

Finally, we replace $\tau$ in \eqref{eq:001} by the following
sequence of stopping times
\[
\tau_{k}\coloneqq\inf\bigg\{ s\geq0:\sup_{t\in[0,s]}\Vert u(s)\Vert_{L_{x}^{2}}^{2}+\int_{0}^{s}\Vert D^{m}u(s)\Vert_{L_{x}^{2}}^{2}\mathrm{d}s>k\bigg\} \wedge T,
\]
and send $k$ to infinity. 
Then \eqref{eq:001} yields the desired estimate for $l=0$ and
the lemma is proved.
\qed
\end{proof}
\begin{proposition}
Let l be a positive integer, $l\geq m$, $p\geq2,$ $r\in(0,1]$ and
$\theta\in(0,1)$. Let $u\in L_{\omega}^{p}L_{t}^{2}H_{x}^{l+m}(Q_{r})$
solve (\ref{eq:model}) in $Q_{r}$ with free terms $f\in L_{\omega}^{p}L_{t}^{2}H_{x}^{l-m}(Q_{r})$
and $g\in L_{\omega}^{p}L_{t}^{2}H_{x}^{l}(Q_{r})$. Then there exists
a constant $C$ depending only on $n$, $p$, $l$, $m$, $\lambda$, $K$, and $\theta$
such that
\begin{align}
 & \Vert D^{l}u\Vert_{L_{\omega}^{p}L_{t}^{\infty}L_{x}^{2}(Q_{\theta r})}+\Vert D^{l+m}u\Vert_{L_{\omega}^{p}L_{t}^{2}L_{x}^{2}(Q_{\theta r})}\label{eq:rescalling}\\
\leq & C\sum_{k=0}^{m-1}r^{-m-l+k}\Vert D^{k}u\Vert_{L_{\omega}^{p}L_{t}^{2}L_{x}^{2}(Q_{r})}+C\sum_{k=0}^{l-m}r^{m+k-l}\Vert D^{k}f\Vert_{L_{\omega}^{p}L_{t}^{2}L_{x}^{2}(Q_{r})}\nonumber \\
 & +C\sum_{k=0}^{l}r^{k-l}\Vert D^{k}g\Vert_{L_{\omega}^{p}L_{t}^{2}L_{x}^{2}(Q_{r})}.\nonumber 
\end{align}
Consequently, for $2(l-|\beta|)>n$,
\begin{align}
& r^{\frac{n}{2}+|\beta|}\Vert\sup_{Q_{\theta r}}|D^{\beta}u|\Vert_{L_{\omega}^{p}} \leq C\sum_{k=0}^{m-1}r^{-m+k}\Vert D^{k}u\Vert_{L_{\omega}^{p}L_{t}^{2}L_{x}^{2}(Q_{r})}\label{eq:embedding}\\
& +C\sum_{k=0}^{l-m}r^{m+k}\Vert D^{k}f\Vert_{L_{\omega}^{p}L_{t}^{2}L_{x}^{2}(Q_{r})} + C\sum_{k=0}^{l}r^{k}\Vert D^{k}g\Vert_{L_{\omega}^{p}L_{t}^{2}L_{x}^{2}(Q_{r})}\nonumber 
\end{align}
where the constant $C$ further depends on $|\beta|$. 
\end{proposition}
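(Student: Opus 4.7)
The plan is to derive (\ref{eq:rescalling}) from Lemma~\ref{lem:the whole space estimate} via a cutoff-and-iteration argument, and then obtain (\ref{eq:embedding}) as a consequence of Sobolev embedding in space.

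First, the parabolic scaling $(x,t)\mapsto(rx,r^{2m}t)$ reduces (\ref{eq:rescalling}) to the case $r=1$, since the model equation is invariant in form (with coefficients satisfying the same bounds) and both sides scale consistently with the stated powers of $r$. With $r=1$, fix $\theta\le\rho<\rho'\le 1$ and a smooth cutoff $\zeta$ with $\zeta\equiv1$ on $Q_\rho$ and $\mathrm{supp}\,\zeta\subset Q_{\rho'}$. Because the leading coefficients do not depend on $x$, the function $v:=\zeta u$ (extended by zero) satisfies an equation of the same form as (\ref{eq:model}) on $\mathbb{R}^n\times[-1,0]$, with zero initial data and free terms
\begin{align*}
\tilde f &= \zeta f + u\,\partial_t \zeta + (-1)^{m+1}\sum_{|\alpha|=|\beta|=m} A_{\alpha\beta}\,[D^{\alpha+\beta},\zeta]\,u,\\
\tilde g^k &= \zeta g^k + \sum_{|\alpha|=m} B_\alpha^k\,[D^\alpha,\zeta]\,u,
\end{align*}
where each commutator is a sum of terms $D^a\zeta\cdot D^b u$ with $a\ge1$. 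Applying Lemma~\ref{lem:the whole space estimate} to $v$ (and using $\zeta\equiv1$ on $Q_\rho$) bounds $\|D^l u\|_{L^p_\omega L^\infty_t L^2_x(Q_\rho)} + \|D^{l+m} u\|_{L^p_\omega L^2_t L^2_x(Q_\rho)}$ by $\|D^l \tilde f\|_{L^p_\omega L^2_t H^{-m}_x} + \|D^l \tilde g\|_{L^p_\omega L^2_t L^2_x}$.

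The main obstacle is that the commutator terms contribute derivatives of $u$ on $Q_{\rho'}$ of orders up to $2m-1$, which exceed the $D^{\le m-1}u$ allowed on the right-hand side of (\ref{eq:rescalling}). To overcome this, I would use a telescoping iteration over a sequence of radii $\rho_i=\theta+(1-\theta)(1-2^{-i})$, applying the cutoff step between each consecutive pair $Q_{\rho_i}\subset Q_{\rho_{i+1}}$. Combined with the interpolation inequality
$$\|D^k u\|_{L^2(Q_{\rho'})} \le \varepsilon\,\|D^{l+m} u\|_{L^2(Q_{\rho'})} + C_\varepsilon \|u\|_{L^2(Q_{\rho'})}, \qquad 0<k<l+m,$$
the high-order terms can be absorbed into the left-hand side after choosing $\varepsilon$ small and summing a geometric series, leaving only $\|D^k u\|$ for $k\le m-1$ together with the desired $f,g$ terms. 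To handle general $l>m$, we exploit that, since the coefficients are $x$-independent, $D^\gamma u$ solves the same model equation with free terms $D^\gamma f$ and $D^\gamma g$; applying the argument inductively on $l$ (or to $D^\gamma u$ with $|\gamma|=l-m$) yields (\ref{eq:rescalling}) in full generality after rescaling back to arbitrary $r\in(0,1]$.

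For (\ref{eq:embedding}), the Sobolev embedding $H^{l-|\beta|}(B_r)\hookrightarrow L^\infty(B_r)$, valid whenever $2(l-|\beta|)>n$, combined with a rescaling, gives pointwise in $(t,\omega)$
$$\sup_{x\in B_{\theta r}} |D^\beta u(x,t)| \le C\sum_{j=0}^{l-|\beta|} r^{j-n/2}\,\|D^{\beta+j} u(\cdot,t)\|_{L^2(B_{\theta r})}.$$
Taking $\sup_t$ and then the $L^p_\omega$-norm, multiplying by $r^{n/2+|\beta|}$, and finally applying (\ref{eq:rescalling}) on intermediate cylinders to control each $\|D^k u\|_{L^p_\omega L^\infty_t L^2_x(Q_{\theta r})}$ for $k\le l$ by the low-order and forcing-term quantities appearing on the right-hand side of (\ref{eq:embedding}), completes the proof.
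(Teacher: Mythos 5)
Your proof is correct, and it takes a genuinely different localization strategy from the paper's. The paper fixes a finite chain of $m+1$ nested cutoffs $\xi_i$ with $\xi_i\equiv1$ on $Q_{\theta_i}$, $\theta_i=\theta^{1/i}$, sets $v_i=\xi_i u$, and applies Lemma~\ref{lem:the whole space estimate} to $v_i$ at the \emph{decreasing} derivative orders $|\beta|=m+1-i$. Each step lowers by exactly one the highest-order derivative of $u$ appearing on the right at the price of enlarging the cylinder once, so after $m+1$ steps the residual $u$-terms are of order $\le m-1$ on $Q_1$. You instead use a single cutoff, observe that the commutator contributes $D^b u$ with $b\le l+m-1$ only (one order below the left-hand side), and absorb those via the Ehrling interpolation inequality together with a Caccioppoli-type iteration over a geometric sequence of radii --- exactly the role Lemma~\ref{lem:5.1} plays elsewhere in the paper. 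Both are standard; yours is a bit more mechanical and, as stated, would even yield the slightly stronger bound with $\|u\|_{L^2(Q_1)}$ replacing $\sum_{k<m}\|D^k u\|_{L^2(Q_1)}$. One shared point of vagueness: passing to general $l>m$ by differentiating the equation and invoking the $l=m$ case for $D^\gamma u$ with $|\gamma|=l-m$ produces $\|D^k u\|$ with $k$ up to $l-1$ on the right, not $m-1$, so a genuine bootstrap over intermediate cylinders (or another round of interpolation) is still required; the paper's ``by induction'' is equally terse here. The Sobolev-embedding deduction of~(\ref{eq:embedding}) from~(\ref{eq:rescalling}) is the same in both arguments.
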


\begin{proof}
By Sobolev's embedding theorem, (\ref{eq:embedding}) can be derived
directly from (\ref{eq:rescalling}). Also we can reduce the problem
for general $r>0$ to the case $r=1$ by rescaling. 
Indeed, for general $r>0$,
we can apply the obtained estimates for $r=1$ to the rescaled function
\[
v(x,t)\coloneqq u(rx,r^{2m}t),\quad\forall(x,t)\in\mathbb{R}^{n}\times[-1,+\infty)
\]
which solves the equation 
\begin{eqnarray*}
\mathrm{d}v(x,t) & = & \Big[-(-1)^{m}\sum_{|\alpha|=|\beta|=m}A_{\alpha\beta}(r^{2m}t)D^{\alpha+\beta}v(x,t)+F(x,t)\Big]\mathrm{d}t\\
 &  & +\sum_{k=1}^{\infty}\Big[\sum_{|\alpha|=m}B_{\alpha}^{k}(r^{2m}t)D^{\alpha}v(x,t)+G^{k}(x,t)\Big]\mathrm{d}W_{t}^{k}
\end{eqnarray*}
with free terms
\[
F(x,t)=r^{2m}f(rx,r^{2m}t),\quad G(x,t)=r^{m}g(rx,r^{2m}t),\quad W_{t}^{k}=r^{-m}w_{r^{2m}t}^{k}
\]
and obviously, $W^{k}$ are mutually independent Wiener processes.
So it suffices to prove (\ref{eq:rescalling}) for $r=1$. By induction,
we shall only consider the case $l=m$. 

For any $\theta\in(0,1)$, choose $m+1$ cut-off functions $\xi_{i}\in C_{0}^{\infty}(\mathbb{R}^{n+1})$ with $i=1,2,\cdots,m+1$,
satisfying i) $0\leq\xi_{i}\leq1,$ ii) $\xi_{i}=1$ in $Q_{\theta_{i}}$
and $\xi_{i}=0$ outside $Q_{\theta_{i+1}}$, where $\theta_{i}=\sqrt[i]{\theta}$.
Let $v_{i}=\xi_{i}u$ which satisfy
\begin{eqnarray}
\mathrm{d}v_{i}(x,t) & = & \Big[-(-1)^{m}\sum_{|\alpha|=|\beta|=m}A_{\alpha\beta}D^{\alpha+\beta}v_{i}+f_{i}(x,t)\Big]\mathrm{d}t\label{eq:17}\\
 &  & +\sum_{k=1}^{\infty}\Big(\sum_{|\eta|=m}B_{\eta}^{k}D^{\eta}v_{i}+g_{i}^{k}\Big)\mathrm{d}w_{t}^{k},\quad\quad i=1,2,\ldots,m+1\nonumber 
\end{eqnarray}
where
\begin{eqnarray*}
f_{i} & = & \xi_{i}f+u\partial_{t}\xi_{i}-\sum_{|\alpha|=|\beta|=m}\sum_{\gamma+\eta=\alpha+\beta,|\eta|>0}C_{\gamma\eta}A_{\alpha\beta}D^{\gamma}uD^{\eta}\xi_{i}\\
g_{i}^{k} & = & \xi_{i}g^{k}-\sum_{|\alpha|=m}\sum_{\gamma+\eta=\alpha,|\eta|>0}C_{\gamma\eta}B_{\alpha}^{k}D^{\gamma}uD^{\eta}\xi_{i}
\end{eqnarray*}
where $C_{\gamma\eta}$ are the constants that can be derived from the Leibniz formula,
depending only on $\gamma$,
$\eta$, and $m$.

Applying Lemma \ref{lem:the whole space estimate} to (\ref{eq:17})
for $|\beta|=m+1-i$ with $i=1,2,\cdots,m+1,$ we have
\begin{eqnarray*}
 &  & \Vert D^{m+1-i}u\Vert_{L_{\omega}^{p}L_{t}^{\infty}L_{x}^{2}(Q_{\theta_{i}})}+\Vert D^{2m+1-i}u\Vert_{L_{\omega}^{p}L_{t}^{2}L_{x}^{2}(Q_{\theta_{i}})}\\
 & \leq & C\bigg[\sum_{k=0}^{m+1-i}\Vert D^{k}f\Vert_{L_{\omega}^{p}L_{t}^{2}H_{x}^{-m}(Q_{\theta_{i+1}})}+\sum_{k=0}^{m+1-i}\Vert D^{k}g\Vert_{L_{\omega}^{p}L_{t}^{2}L_{x}^{2}(Q_{\theta_{i+1}})}+\Vert u\Vert_{L_{\omega}^{p}L_{t}^{2}H_{x}^{2m-i}(Q_{\theta_{i+1}})}\bigg]\\
 & \leq & C\bigg[\Vert f\Vert_{L_{\omega}^{p}L_{t}^{2}L_{x}^{2}(Q_{\theta_{i+1}})}+\sum_{k=0}^{m+1-i}\Vert D^{k}g\Vert_{L_{\omega}^{p}L_{t}^{2}L_{x}^{2}(Q_{\theta_{i+1}})}+\Vert u\Vert_{L_{\omega}^{p}L_{t}^{2}H_{x}^{2m-i}(Q_{\theta_{i+1}})}\bigg].
\end{eqnarray*}
From the above inequalities, one can prove (\ref{eq:rescalling})
for $l=m$. Higher-order estimates follow from induction. The proof
is complete.
\qed
\end{proof}

Next we shall give an estimate for equation (\ref{eq:model}) with
the Dirichlet boundary conditions
\begin{equation}
\begin{cases}
u(x,0)=0, & \forall x\in\mathbb{R}^{n}\\
D^{\alpha}u(x,t)|_{\partial B_{r}}=0, & |\alpha|\leq m-1.
\end{cases}\label{eq:Dirichlet condition}
\end{equation}

\begin{proposition}
\label{prop:Diri}Let $f$ and $g$ be in $L_{\omega}^{p}L_{t}^{2}H_{x}^{l}(B_{r}\times(0,r^{2m}))$
for all $l\in\mathbb{N}$. Then the Dirichlet problem (\ref{eq:model})
with (\ref{eq:Dirichlet condition}) admits a unique weak solution
$u\in L_{\omega}^{2}L_{t}^{2}H_{x}^{m}(B_{r}\times(0,r^{2m}))$. For
each $t\in(0,r^{2m})$, $u(\cdot,t)\in L^{p}(\Omega;C^{l}(B_{\varepsilon}))$
for all $l\geq0$ and $\varepsilon\in(0,r)$. Moreover, there is a
constant $C=C(n,p,\lambda,m,K)$ such that
\begin{eqnarray}
 &  & \sum_{i=0}^{m-1}r^{i}\Vert D^{i}u\Vert_{L_{\omega}^{p}L_{t}^{2}L_{x}^{2}(B_{r}\times(0,r^{2m}))}\label{eq:Diric esti}\\
 & \leq & C\left[r^{2m}\Vert f\Vert_{L_{\omega}^{p}L_{t}^{2}L_{x}^{2}(B_{r}\times(0,r^{2m}))}+\sum_{i=0}^{m-1}r^{m+i}\Vert D^{i}g\Vert_{L_{\omega}^{p}L_{t}^{2}L_{x}^{2}(B_{r}\times(0,r^{2m}))}\right]\nonumber 
\end{eqnarray}
\end{proposition}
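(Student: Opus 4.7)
The plan is to adapt the scheme of Lemma~\ref{lem:the whole space estimate} to the ball $B_r$ with Dirichlet conditions, using \eqref{eq:Dirichlet condition} to eliminate boundary contributions from all spatial integrations by parts. First I would reduce to $r=1$ by the same rescaling used in the preceding proposition, setting $v(x,t)=u(rx,r^{2m}t)$, $F(x,t)=r^{2m}f(rx,r^{2m}t)$, $G^k(x,t)=r^m g^k(rx,r^{2m}t)$ and $W^k_t=r^{-m}w^k_{r^{2m}t}$; a direct scaling computation converts \eqref{eq:Diric esti} into its $r=1$ analogue
\[
\sum_{i=0}^{m-1}\|D^i v\|_{L^p_\omega L^2_t L^2_x(B_1\times(0,1))}\le C\Big(\|F\|_{L^p_\omega L^2_t L^2_x}+\sum_{i=0}^{m-1}\|D^i G\|_{L^p_\omega L^2_t L^2_x}\Big).
\]
At $r=1$, existence and uniqueness of the weak solution in $L^2_\omega L^2_t H^m_0(B_1)\cap L^2_\omega C_t L^2$ for $p=2$ is a direct application of the variational framework of \cite{krylov1979stochastic} on the Gelfand triple $H^m_0(B_1)\hookrightarrow L^2(B_1)\hookrightarrow H^{-m}(B_1)$: condition \eqref{eq:parbolic1} (i.e.~\eqref{eq:parbolic} at $p=2$) supplies the required coercivity, with no boundary terms appearing because of \eqref{eq:Dirichlet condition}.

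To upgrade the solution to $L^p_\omega$ and prove the estimate I would apply It\^{o}'s formula to $\|v(\cdot,t)\|^p_{L^2(B_1)}$ and mirror the argument of Lemma~\ref{lem:the whole space estimate} verbatim. Thanks to \eqref{eq:Dirichlet condition}, the spatial identities
\[
\int_{B_1} v\,A_{\alpha\beta}D^{\alpha+\beta}v\,\mathrm{d}x=-\int_{B_1}A_{\alpha\beta}D^\alpha v\,D^\beta v\,\mathrm{d}x,
\]
and, for odd $m$, $\int_{B_1} v\,B^k_\alpha D^\alpha v\,\mathrm{d}x=0$, hold without boundary contribution, so the $p$-dependent coercivity \eqref{eq:parbolic} enters the even-$m$ case exactly as before. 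After stopping-time truncation and the BDG and Gronwall steps one obtains
\[
\mathbb{E}\sup_{t\le 1}\|v\|^p_{L^2}+\mathbb{E}\Big(\int_0^1\|D^m v\|^2_{L^2}\,\mathrm{d}t\Big)^{p/2}\le C\,\mathbb{E}\Big(\int_0^1\big(\|F\|^2_{H^{-m}(B_1)}+\|G\|^2_{L^2}\big)\,\mathrm{d}t\Big)^{p/2}.
\]
Two applications of the Poincar\'e inequality on $H^m_0(B_1)$---namely $\|\phi\|_{L^2}\le C\|D^m\phi\|_{L^2}$ and its dual $\|F\|_{H^{-m}(B_1)}\le C\|F\|_{L^2(B_1)}$---convert this into the required bound on $\sum_{i<m}\|D^i v\|_{L^p_\omega L^2_t L^2_x}$ in terms of $\|F\|_{L^2_{t,x}}$ and $\|D^i G\|_{L^2_{t,x}}$.

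Finally, the interior smoothness claim follows by bootstrapping with the preceding proposition. For each $t\in(0,r^{2m})$, $\varepsilon\in(0,r)$ and $x_0\in B_\varepsilon$, one can fit a parabolic cylinder $Q_\rho(x_0,t)\subset B_r\times(0,r^{2m})$ on which $f$ and $g$ enjoy $H^l_x$-regularity for every $l$; iterating \eqref{eq:rescalling} starting from the $H^m_x$-regularity just obtained raises $v$ to $L^p_\omega L^2_t H^{l+m}_x$ on slightly shrunk subcylinders (with the higher interior regularity itself produced by a standard difference-quotient argument), after which \eqref{eq:embedding} combined with a finite covering of $B_\varepsilon$ delivers $v(\cdot,t)\in L^p(\Omega;C^l(B_\varepsilon))$ for every $l$. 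The main technical obstacle I expect is justifying the It\^{o} formula at the $L^p_\omega$ level when only $L^2_\omega$ regularity is a priori in hand, but this is handled by the same truncation-in-norm stopping times $\tau_k$ used at the end of Lemma~\ref{lem:the whole space estimate}; the essential point throughout is that \eqref{eq:Dirichlet condition} lets every interior $L^p$-argument from the whole-space case transplant directly to the ball.
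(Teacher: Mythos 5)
Your proposal is correct and follows essentially the same route as the paper, which itself is quite terse here: the paper cites~\cite[Section~3.2]{krylov1979stochastic} for well-posedness, obtains interior regularity by applying Lemma~\ref{lem:the whole space estimate} to $\varphi D^\alpha u$ for a cut-off $\varphi$, and states that the estimate~\eqref{eq:Diric esti} is ``analogous to the proof of~\eqref{eq:p estimate} with the help of rescaling and Sobolev--Gagliardo--Nirenberg inequality.'' You spell out exactly this plan: rescale to $r=1$, rerun the It\^{o}/BDG/Gronwall machinery of Lemma~\ref{lem:the whole space estimate} on $B_1$, note that~\eqref{eq:Dirichlet condition} annihilates all boundary contributions from integration by parts (including the odd-$m$ cancellation~\eqref{eq:002}), and then convert the resulting bound on $\|D^m v\|$ and $\|F\|_{H^{-m}}$ into the desired bound on $\sum_{i<m}\|D^i v\|$ and $\|F\|_{L^2}$. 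The only cosmetic difference is that you invoke the Poincar\'{e} inequality on $H^m_0(B_1)$ (together with interpolation for the intermediate derivatives) where the paper names Sobolev--Gagliardo--Nirenberg, and your bootstrap for interior smoothness iterates~\eqref{eq:rescalling}--\eqref{eq:embedding} over a covering rather than applying Lemma~\ref{lem:the whole space estimate} directly to $\varphi D^\alpha u$; both are standard localization devices and yield the same conclusion. Your remark that the $L^p_\omega$-level It\^{o} formula must be justified by the truncation stopping times $\tau_k$ is accurate and matches the paper's handling in Lemma~\ref{lem:the whole space estimate}.
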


\begin{proof}
The existence and uniqueness of the weak solution of the Dirichlet
problem (\ref{eq:model}) and (\ref{eq:Dirichlet condition}) follow
from \cite[Section~3.2]{krylov1979stochastic}. Then we choose a cut-off function
$\varphi\in C_{0}^{\infty}(\mathbb{R}^{n})$ such that $\varphi(x)=1$
if $x\in B_{\varepsilon}$ and $\varphi(x)=0$ if $|x|>(r+\varepsilon)/2$
where $\varepsilon\in(0,r)$. Applying lemma \ref{lem:the whole space estimate}
to $v\coloneqq\varphi D^{\alpha}u$ with Sobolev's embedding theorem,
the interior regularity can be obtained. We omit the proof of the
estimate (\ref{eq:Diric esti}) because it's analogous to the proof
of (\ref{eq:p estimate}) with the help of rescaling and Sobolev-Gagliargo-Nirenberg
inequality.
\qed
\end{proof}

\section{Interior H\"{o}lder estimates for the model equation}

In this section we assume that $f\in C_{x}^{0}(\mathbb{R}^{n}\times[-1,\infty);L_{\omega}^{p})$
and $g\in C_{x}^{m}(\mathbb{R}^{n}\times[-1,\infty);L_{\omega}^{p})$,
and $f(x,t)$ and $D^{m}g(x,t)$ are Dini continuous with respect to $x$
uniformly in $t$, namely, the modulus of continuity defined by
\[
\varpi(r)=\sup_{t\geq-1,|x-y|\leq r}(\Vert f(x,t)-f(y,t)\Vert_{L_{\omega}^{p}}+\Vert D^{m}g(x,t)-D^{m}g(y,t)\Vert_{L_{\omega}^{p}})
\]
satisfies that
\[
\int_{0}^{1}\frac{\varpi(r)}{r}\mathrm{d}r<\infty
\]

\begin{theorem}\label{thm:dini}
Let u be a quasi-classical solution to (\ref{eq:model}) in $Q_{1}$.
Under the above settings, there is a constant C depending only on
n, $\lambda$, p, m and K, such that for any $X,\:Y\in Q_{1/16}$,
\begin{equation}
\Vert D^{2m}u(X)-D^{2m}u(Y)\Vert_{L_{\omega}^{p}}\leq C\left[\dis M_{1}+\int_{0}^{\dis}\frac{\varpi(r)}{r}\mathrm{d}r+\dis\int_{\dis}^{1}\frac{\varpi(r)}{r^{2}}\mathrm{d}r\right]\label{eq:Dini es}
\end{equation}
where $\varrho=|X-Y|_{\textup{p}}$ and $M_{1}=\interleave u\interleave_{m-1;Q_{1}}+\interleave f\interleave_{0;Q_{1}}+\interleave g\interleave_{m;Q_{1}}$.
\end{theorem}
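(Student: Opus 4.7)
The plan is to follow the Wang perturbation scheme at dyadic scales, in the spirit of \cite{du2019cauchy,du2017stochastic}. Fix $X_{0}=(x_{0},t_{0})\in Q_{1/16}$ and dyadic radii $r_{k}=2^{-k-4}$, and set $Q^{k}\coloneqq Q_{r_{k}}(X_{0})\subset Q_{1}$. At each scale I split $u=v_{k}+w_{k}$ on $Q^{k}$, where $w_{k}$ is the quasi-classical Dirichlet solution provided by Proposition~\ref{prop:Diri} with zero initial-lateral data and residual free terms
\[
\tilde{f}(x,t)=f(x,t)-f(x_{0},t),\qquad\tilde{g}(x,t)=g(x,t)-\sum_{|\alpha|\le m}\frac{D^{\alpha}g(x_{0},t)}{\alpha!}(x-x_{0})^{\alpha}.
\]
Then $v_{k}=u-w_{k}$ solves (\ref{eq:model}) on $Q^{k}$ whose free terms $f(x_{0},\cdot)$ and $P_{m}g(x,\cdot)$ are spatially constant and a polynomial of degree $\le m$, respectively. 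Crucially, after spatial differentiation of order $m+1$ both free terms disappear, so $D^{m+1}v_{k}$ satisfies the \emph{homogeneous} model equation---the setting in which Proposition~3.2 can be iterated to bound arbitrarily high derivatives of $v_{k}$.

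Taylor's theorem and the Dini continuity of $f$ and $D^{m}g$ give pointwise $L_{\omega}^{p}$-bounds $\interleave\tilde{f}\interleave_{0;Q^{k}}\le\varpi(r_{k})$ and $\interleave D^{j}\tilde{g}\interleave_{0;Q^{k}}\le Cr_{k}^{m-j}\varpi(r_{k})$ for $0\le j\le m$. Feeding these into the Dirichlet estimate (\ref{eq:Diric esti}) yields
\[
\sum_{i=0}^{m-1}r_{k}^{i}\|D^{i}w_{k}\|_{L_{\omega}^{p}L_{t}^{2}L_{x}^{2}(Q^{k})}\le Cr_{k}^{2m+(n+2m)/2}\varpi(r_{k}).
\]
Combined with $\interleave u\interleave_{m-1;Q_{1}}\le M_{1}$, substituting $v_{k}=u-w_{k}$ produces analogous integral bounds for $v_{k}$ with $M_{1}+\varpi(r_{k})$ in place of $\varpi(r_{k})$. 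Since $D^{m+1}v_{k}$ solves the homogeneous equation, iterating (\ref{eq:embedding}) with $l$ large enough to absorb the Sobolev embedding then yields $L_{t,x}^{\infty}L_{\omega}^{p}$ bounds on every derivative $D^{\beta}v_{k}$ on $Q^{k+1}$ with the standard parabolic scaling in $r_{k}$.

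To compare $D^{2m}u$ at $X\ne Y$ in $Q_{1/16}$, set $\varrho=|X-Y|_{\mathrm{p}}$, place the centre $X_{0}=X$, and choose $k^{*}$ with $r_{k^{*}+1}\le\varrho\le r_{k^{*}}$. Decomposing $u=v_{k^{*}}+w_{k^{*}}$, the $v$-contribution is controlled by a parabolic mean-value argument: the spatial increment of $D^{2m}v_{k^{*}}$ is bounded by $|x-y|\,\|D^{2m+1}v_{k^{*}}\|$, and the pure time increment is obtained from the It\^o form of (\ref{eq:model}) for $D^{2m}v_{k^{*}}$ combined with BDG, producing $|t-s|\,\|D^{4m}v_{k^{*}}\|+|t-s|^{1/2}\,\|D^{3m}v_{k^{*}}\|$; using the previous paragraph's bounds and the relation $|t-s|^{1/2m}\le r_{k^{*}}$ to absorb the extra powers of $r_{k^{*}}$, this sums to $C[M_{1}\varrho+\varpi(\varrho)]$. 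For the residual $D^{2m}w_{k^{*}}(X)-D^{2m}w_{k^{*}}(Y)$ one does not estimate $D^{2m}w_{k^{*}}$ pointwise but iterates the decomposition, replacing $w_{k}$ by $(v_{k+1}-v_{k})+w_{k+1}$ on $Q^{k+1}$ and applying the $v$-piece MVT at each successive scale. The resulting dyadic series splits into a small-scale sum $\sum_{k\ge k^{*}}\varpi(r_{k})\sim\int_{0}^{\varrho}\varpi(r)/r\,dr$ and a large-scale sum $\varrho\sum_{k\le k^{*}}\varpi(r_{k})/r_{k}\sim\varrho\int_{\varrho}^{1}\varpi(r)/r^{2}\,dr$, yielding (\ref{eq:Dini es}).

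The main obstacle I anticipate is the parabolic time increment of $D^{2m}v_{k}$. Because quasi-classical solutions are regular only in the spatial variable, the time variation must be computed from the SPDE itself; BDG then forces us to simultaneously bound $D^{3m}v_{k}$ and $D^{4m}v_{k}$, which requires Proposition~3.2 with $l$ large enough that both embed into $L^{\infty}_{t,x}L_{\omega}^{p}$. Careful bookkeeping of the powers of $r_{k}$ through these high-derivative interior estimates---and verifying that $|t-s|^{1/2m}\le r_{k}$ absorbs the extra factors produced by differentiating beyond order $2m$---is the technical core; the remaining dyadic Dini summation is then standard.
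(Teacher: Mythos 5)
Your strategy is the same one the paper uses: freeze the free terms at the centre, pose a Dirichlet problem at each dyadic scale, exploit that the $(m+1)$-st derivative of the good part solves the homogeneous model equation, and control the time increment of $D^{2m}$ of the good part via It\^o/BDG at the cost of bounding $D^{3m}$ and $D^{4m}$. Your $v_{k}$ is exactly the paper's $u^{\kappa}$ (the frozen-coefficient Dirichlet solution with boundary data $u$), your $w_{k}=u-v_{k}$ is its residual, and your telescoping $w_{k}=(v_{k+1}-v_{k})+w_{k+1}$ is precisely the paper's $h^{\iota}=u^{\iota}-u^{\iota-1}$ decomposition used in bounding $K_{2}$.

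There is, however, a genuine gap in how you dispose of the residual. Your dyadic chain is centred at $X$, and once the scale falls below $\varrho=|X-Y|_{\mathrm p}$ the cylinders $Q^{k}$ no longer contain $Y$ (with your choice $r_{k^{*}+1}\le\varrho\le r_{k^{*}}$, $Y$ already lies on the boundary of $Q^{k^{*}}$, outside the region where interior estimates for $v_{k^{*}}$ are available). Consequently the ``small-scale sum'' $\sum_{k\ge k^{*}}\varpi(r_{k})$ controls $\|D^{2m}w_{k^{*}}(X)\|_{L_{\omega}^{p}}$ — i.e.\ the quantity the paper calls $K_{1}$ — but it gives you nothing about $\|D^{2m}w_{k^{*}}(Y)\|_{L_{\omega}^{p}}$. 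That second residual is the paper's term $K_{3}=\|D^{2m}u^{\tilde\kappa}(Y)-D^{2m}u(Y)\|_{L_{\omega}^{p}}$, and it requires an entirely separate dyadic chain of frozen-coefficient Dirichlet approximations $u^{Y,\kappa}$ centred at $Y$: one compares $u^{\tilde\kappa}$ to $u^{Y,\tilde\kappa}$ (their frozen free terms differ by $f(0,t)-f(y,t)$ and the two Taylor polynomials of $g$, which are controlled by $\varpi(\varrho)$), and then compares $u^{Y,\tilde\kappa}$ to $u$ at $Y$ by the same convergence argument as for $K_{1}$. Without this second chain the proof does not close. A secondary bookkeeping point: the bound ``the $v$-contribution sums to $C[M_{1}\varrho+\varpi(\varrho)]$'' is not obtainable from the single scale $k^{*}$; the mean-value estimate for $D^{2m}v_{k^{*}}$ produces $\varrho\cdot\|D^{2m+1}v_{k^{*}}\|_{L^{\infty}}$, and bounding $\|D^{2m+1}v_{k^{*}}\|$ by $CM_{1}$ already requires telescoping back to the unit scale, which is precisely the large-scale sum you attribute to the residual. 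The paper avoids this double-counting by applying the mean-value inequality separately to $u^{0}$ (giving $CM_{1}\varrho$) and to each $h^{\iota}$ (giving the $\varrho\int_{\varrho}^{1}\varpi(r)/r^{2}\,\mathrm{d}r$ tail). Finally, you should state the initial mollification of $u,f,g$ explicitly; without it the manipulations with high-order derivatives of $v_{k}$ and $w_{k}$ are only formal.
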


\begin{proof}
Firstly we mollify the functions $u$, $f$ and $g$ in the spatial
variables. We choose a nonnegative and symmetric mollifier $\varphi:\,\mathbb{R}^{n}\rightarrow\mathbb{R}$
and define $\varphi^{\varepsilon}(x)=\varepsilon^{n}\varphi(x/\varepsilon)$,
$u^{\varepsilon}=u\ast\varphi^{\varepsilon}$, $f^{\varepsilon}=f\ast\varphi^{\varepsilon}$
and $g^{\varepsilon}=g\ast\varphi^{\varepsilon}$. It is easy to check
that $f^{\varepsilon}$ and $D^{m}g^{\varepsilon}$ are Dini continuous
and have the same modulus of continuity $\varpi$ with $f$ and $D^{m}g$
and satisfy
\begin{align*}
\interleave f^{\varepsilon}-f\interleave_{0;\mathbb{R}^{n}}+\interleave g^{\varepsilon}-g\interleave_{m;\mathbb{R}^{n}} & \rightarrow0\\
\left\Vert D^{2m}u^{\varepsilon}(X)-D^{2m}u(X)\right\Vert _{L_{\omega}^{p}}\rightarrow0\hspace{3 mm} & \forall X\in\mathbb{R}^{n}\times\mathbb{R},
\end{align*}
as $\varepsilon\rightarrow0$. On the other hand, from Fubini's theorem
one can check that $u^{\varepsilon}$ satisfies the model equation
(\ref{eq:model}) in the classical sense with free terms $f^{\varepsilon}$and
$g^{\varepsilon}$. Therefore it suffices to prove the theorem for
the mollified functions, and the general case is straightforward by
passing the limits. The readers are referred to the appendix of \cite{du2019cauchy}
for more details. Then based on the smoothness of mollified functions,
we can assume that $f$ and $g$ satisfy the following additional
condition:
\[
(\textbf{A})\quad f,g\in L_{\omega}^{p}L_{t}^{2}H_{x}^{k}(Q_{R})\cap C_{x}^{k}(Q_{R};L_{\omega}^{p})\:\mbox{for all}\:k\in\mathbb{N\mbox{ and }}R>0.
\]

From the definition of $\varpi$, one can see that for any $x,y\in\mathbb{R}^{n}$
and $t\in\mathbb{R}$,
\begin{equation}
\begin{gathered}\Vert f(x,t)-f(y,t)\Vert_{L_{\omega}^{p}}+\Vert D^{m}g(x,t)-D^{m}g(y,t)\Vert_{L_{\omega}^{p}}\leq\varpi(|x-y|)\\
\bigg\Vert D^{\beta}g(y,t)-\sum_{|\alpha|\leq m-|\beta|}\frac{D^{\alpha+\beta}g(x,t)}{\alpha!}(y-x)^{\alpha}\bigg\Vert _{L_{\omega}^{p}}\leq C(m)|x-y|^{m-|\beta|}\varpi(|x-y|),\;|\beta|\leq m
\end{gathered}
\label{eq:Dini continuous}
\end{equation}

By translation we may suppose that $X=(0,0)$ and prove the theorem
for any $Y\in Q_{1/8}$. Given $Y=(y,s)\in Q_{1/8},$ and $\tilde{\kappa}\in\mathbb{N}$
such that $\dis\coloneqq|Y|_{\textup{p}}\in[\rho^{\tilde{\kappa}+2},\rho^{\tilde{\kappa}+1}).$
With $\rho=1/2$, we denote
\[
Q^{\kappa}=Q_{\rho^{\kappa}}(0,0),\quad\kappa=0,1,2,\cdots.
\]
Let us introduce the following Dirichlet problems:
\begin{eqnarray*}
\mathrm{d}u^{\kappa} & = & \Big[-(-1)^{m}\sum_{|\gamma|=2m}A_{\gamma}D^{\gamma}u^{\kappa}+f(0,t)\Big]\mathrm{d}t\\
 &  & +\sum_{k=1}^{\infty}\bigg[\sum_{|\eta|=m}B_{\eta}^{k}D^{\eta}u^{\kappa}+\sum_{|\alpha|\leq m}\frac{D^{\alpha}g^{k}(0,t)}{\alpha!}x^{\alpha}\bigg]\mathrm{d}w_{t}^{k}\quad\mbox{in }Q^{\kappa}\\
D^{\alpha}u^{\kappa} & = & D^{\alpha}u\qquad\mbox{on }\partial_{\textup{p}}Q^{\kappa},|\alpha|\leq m-1
\end{eqnarray*}
where $\partial_{\textup{p}}Q^{\kappa}$ denotes the parabolic boundary
of the cylinder $Q^{\kappa}$ for $\kappa=0,1,2,\dots$. Then the
solvability and interior regularity of each $u^{\kappa}$ can be obtained
by applying Proposition \ref{prop:Diri} to $u^{\kappa}-u$. 

We have the following decomposition
\begin{eqnarray}
 &  & \left\Vert D^{2m}u(Y)-D^{2m}u(0)\right\Vert _{L_{\omega}^{p}}\nonumber\\
 & \leq & \left\Vert D^{2m}u^{\tilde{\kappa}}(0)-D^{2m}u(0)\right\Vert _{L_{\omega}^{p}}+\left\Vert D^{2m}u^{\tilde{\kappa}}(Y)-D^{2m}u^{\tilde{\kappa}}(0)\right\Vert _{L_{\omega}^{p}}\nonumber\\
 &  & +\left\Vert D^{2m}u^{\tilde{\kappa}}(Y)-D^{2m}u(Y)\right\Vert _{L_{\omega}^{p}}\nonumber\\
 & \eqqcolon & K_{1}+K_{2}+K_{3}.\label{eq:003}
\end{eqnarray}
The next step is to estimate the three terms respectively.
We split it into three lemmas.
\begin{lemma}
\label{claim:.K1}$$K_{1}\leq C\int_{0}^{\rho^{\tilde{\kappa}-1}}\frac{\varpi(r)}{r}\mathrm{d}r.$$
\end{lemma}

\begin{proof}
Apply (\ref{eq:embedding}) to $u^{\kappa}-u^{\kappa+1}$ with $|\beta|=l,$
$r=\rho^{\kappa+1}$, $\theta=\frac{1}{2}$ to get
\[
I_{\kappa,l}\coloneqq\interleave D^{l}(u^{\kappa}-u^{\kappa+1})\interleave{}_{0;Q^{\kappa+2}}\leq C\sum_{i=0}^{m-1}\rho^{(i-l)(\kappa+1)}\bigg\Vert \fint_{Q^{\kappa+1}}|D^{i}(u^{\kappa}-u^{\kappa+1})|^{2}\mathrm{d}X\bigg\Vert _{L_{\omega}^{p/2}}^{1/2}.
\]
In what follows, we define $\fint_{Q}=\frac{1}{|Q|}\int_{Q}$,
where $|Q|$ is the Lebesgue measure of the set $Q\subset\mathbb{R}^{n+1}$. 

On the other hand, from (\ref{eq:Diric esti}) one can obtain
\[
J_{\kappa}\coloneqq\sum_{i=0}^{m-1}\rho^{i\kappa}\left\Vert \fint_{Q_{\kappa}}|D^{i}(u^{\kappa}-u)|^{2}\mathrm{d}X\right\Vert _{L_{\omega}^{p/2}}^{1/2}\leq C\rho^{2m\kappa}\varpi(\rho^{\kappa}).
\]
Combining the above we derive
\begin{equation}
I_{\kappa,l}\leq C\rho^{-l(\kappa+1)}(J_{\kappa}+J_{\kappa+1})\leq C\rho^{(2m-l)\kappa-l}\varpi(\rho^{\kappa})\label{eq:neighbour}
\end{equation}
where $C$ is independent of $\kappa$. Choose $l=2m$, then
\[
\sum_{\kappa\geq1}\interleave D^{2m}(u^{\kappa}-u^{\kappa+1})\interleave_{0;Q^{\kappa+2}}\leq C\rho^{-2m}\sum_{\kappa\geq1}\varpi(\rho^{\kappa})\leq C\int_{0}^{1}\frac{\varpi\left(r\right)}{r}\mathrm{d}r<\infty,
\]
which implies that $D^{2m}u^{\kappa}(0)$ converges in $L_{\omega}^{p}$
as $\kappa\rightarrow\infty$. 
Here $0$ is the zero vector in $\mathbb{R}^{n+1}$. Next
we shall prove that the limit is $D^{2m}u(0)$. It suffices to prove
\begin{equation}
\lim_{\kappa\rightarrow\infty}\left\Vert D^{2m}u^{\kappa}(0)-D^{2m}u(0)\right\Vert _{L_{\omega}^{2}}=0
\end{equation}
as $p\geq2$. Applying (\ref{eq:embedding}) to $u^{\kappa}-u$ with
$|\beta|=2m$, $l=n+2m$, $r=\rho^{\kappa}$, $\theta=1/2$ and $p=2$,
we have
\begin{eqnarray*}
 &  & \sup_{Q^{\kappa+1}}\left\Vert D^{2m}(u^{\kappa}-u)\right\Vert _{L_{\omega}^{2}}^{2}\\
 & \leq & C\sum_{i=0}^{m-1}\rho^{-4m\kappa+2i\kappa}\mathbb{E\fint_{Q^{\kappa}}}|D^{i}(u^{\kappa}-u)|\mathrm{d}X+C\mathbb{E}\fint_{Q^{\kappa}}\left|f(x,t)-f(0,t)\right|^{2}\mathrm{d}X\\
 &  & +C\sum_{i\leq m}\rho^{(2i-2m)\kappa}\mathbb{E}\fint_{Q^{\kappa}}\bigg\Vert D^{i}g(x,t)-\sum_{|\alpha|\leq m-i}\frac{D^{\alpha}D^{i}g(0,t)}{\alpha!}x^{\alpha}\bigg\Vert ^{2}\mathrm{d}X\\
 &  & +C\sum_{i=1}^{n+m}\rho^{2i\kappa}\mathbb{E}\fint_{Q^{\kappa}}\left(\left|D^{i}f\right|^{2}+\left\Vert D^{i+m}g\right\Vert ^{2}\right)\textup{\ensuremath{\mathrm{d}}}X\\
 & \leq & C\sum_{i=0}^{m-1}\rho^{-4m\kappa+2i\kappa}\mathbb{E\fint_{Q^{\kappa}}}|D^{i}(u^{\kappa}-u)|^{2}\mathrm{d}X+C\varpi(\rho^{\kappa})^{2}+C\sum_{i=1}^{n+m}\rho^{2i\kappa}(\left\llbracket f\right\rrbracket _{i;Q^{\kappa}}^{2}+\left\llbracket g\right\rrbracket _{i+m;Q^{\kappa}}^{2})
\end{eqnarray*}
where the last two terms tend to 0 as $\kappa\rightarrow\infty$.
From (\ref{eq:Diric esti}) and (\ref{eq:Dini continuous}) we have
\begin{eqnarray*}
 &  & \sum_{i=0}^{m-1}\rho^{-4m\kappa+2i\kappa}\mathbb{E\fint_{Q^{\kappa}}}|D^{i}(u^{\kappa}-u)|^{2}\mathrm{d}X\\
 & \leq & C\mathbb{E}\fint_{Q^{\kappa}}\bigg(|f(x,t)-f(0,t)|^{2}+\sum_{i=0}^{m-1}\rho^{(2i-2m)\kappa}\bigg\Vert D^{i}g(x,t)-\sum_{|\alpha|\leq m-i}\frac{D^{\alpha}D^{i}g(0,t)}{\alpha!}x^{\alpha}\bigg\Vert ^{2}\bigg)\mathrm{d}X\\
 & \leq & C\varpi\left(\rho^{\kappa}\right)^{2}\rightarrow0,\qquad\mbox{as }\kappa\rightarrow\infty.
\end{eqnarray*}
Therefore $D^{2m}u^{\kappa}(0)$ converges strongly to $D^{2m}u(0)$
in $L_{\omega}^{p}$. Moreover, we have
\begin{align}
K_{1}=\left\Vert D^{2m}u^{\tilde{\kappa}}(0)-D^{2m}u(0)\right\Vert _{L_{\omega}^{p}} & \leq\sum_{j\geq\tilde{\kappa}}\interleave D^{2m}(u^{j}-u^{j+1})\interleave_{0;Q^{\kappa+2}}\label{eq:20}\\
 & \leq C\int_{0}^{\rho^{\tilde{\kappa}-1}}\frac{\varpi(r)}{r}\mathrm{d}r\nonumber 
\end{align}
where $C=C(n,m,\lambda,p,K)$.
\qed
\end{proof}
\begin{lemma}
\label{claim:K2}$$K_{2}\leq C\dis M_{1}+C\dis\int_{\dis}^{1}\frac{\varpi(r)}{r^{2}}\mathrm{d}r.$$
\end{lemma}

\begin{proof}
Define
\[
h^{\iota}\coloneqq u^{\iota}-u^{\iota-1},\quad\mbox{for }\iota=1,2,\ldots,\tilde{\kappa}.
\]
Then we decompose $K_{2}$ by
\begin{eqnarray*}
K_{2} & = & \left\Vert D^{2m}u^{\tilde{\kappa}}(Y)-D^{2m}u^{\tilde{\kappa}}(0)\right\Vert _{L_{\omega}^{p}}\\
 & \leq & \left\Vert D^{2m}u^{0}(Y)-D^{2m}u^{0}(0)\right\Vert _{L_{\omega}^{p}}+\sum_{\iota=1}^{\tilde{\kappa}}\left\Vert D^{2m}h^{\iota}(Y)-D^{2m}h^{\iota}(0)\right\Vert _{L_{\omega}^{p}}.
\end{eqnarray*}
As $D^{m+1}u^{0}$ satisfies the following homogeneous equation:
\begin{equation}
\mathrm{d}(D^{m+1}u^{0})=-(-1)^{m}\sum_{|\gamma|=2m}A_{\gamma}D^{\gamma}(D^{m+1}u^{0})\,\mathrm{d}t+\sum_{k=1}^{\infty}\sum_{|\eta|=m}B_{\eta}^{k}D^{\eta}(D^{m+1}u^{0})\,\mathrm{d}w_{t}^{k}
\end{equation}
in $Q_{3/4}.$ Using (\ref{eq:embedding}) to $D^{m+1}u^{0}$, one has
\begin{eqnarray*}
 &  & \interleave D^{3m}u^{0}\interleave_{0;Q_{1/4}}+\interleave D^{4m}u^{0}\interleave_{0;Q_{1/4}}\\
 & \leq & C\sum_{i=0}^{m-1}2^{-i}\left\Vert D^{i+m+1}u^{0}\right\Vert _{L_{\omega}^{p}L_{t}^{2}L_{x}^{2}(Q_{1/2})}\\
 & \leq & C\sum_{i=0}^{m-1}2^{-i}\left[\left\Vert D^{i+m+1}(u^{0}-u)\right\Vert _{L_{\omega}^{p}L_{t}^{2}L_{x}^{2}(Q_{1/2})}+\left\Vert D^{i+m+1}u\right\Vert _{L_{\omega}^{p}L_{t}^{2}L_{x}^{2}(Q_{1/2})}\right]
\end{eqnarray*}
Applying (\ref{eq:rescalling}) to $u$, one can get
\begin{align*}
&\sum_{i=0}^{m-1}\left\Vert D^{i+m+1}u\right\Vert _{L_{\omega}^{p}L_{t}^{2}L_{x}^{2}(Q_{1/2})}\\
&\leq C\left(\sum_{i=0}^{m-1}\left\Vert D^{i}u\right\Vert _{L_{\omega}^{p}L_{t}^{2}L_{x}^{2}(Q_{1})}+\left\Vert f\right\Vert _{L_{\omega}^{p}L_{t}^{2}L_{x}^{2}(Q_{1})}+\sum_{i=0}^{m}\left\Vert D^{i}g\right\Vert _{L_{\omega}^{p}L_{t}^{2}L_{x}^{2}(Q_{1})}\right).
\end{align*}
Applying (\ref{eq:rescalling}) and (\ref{eq:Diric esti}) to $u^{0}-u$
one can obtain
\begin{eqnarray*}
 &  & \sum_{i=0}^{m-1}\left\Vert D^{i+m+1}(u^{0}-u)\right\Vert _{L_{\omega}^{p}L_{t}^{2}L_{x}^{2}(Q_{1/2})}\\
 & \leq & C\sum_{i=0}^{m-1}\left\Vert D^{i}\left(u^{0}-u\right)\right\Vert _{L_{\omega}^{p}L_{t}^{2}L_{x}^{2}(Q_{1})}+C\left\Vert f(x,t)-f(0,t)\right\Vert _{L_{\omega}^{p}L_{t}^{2}L_{x}^{2}(Q_{1})}\\
 &  & +C\sum_{i=0}^{m}\bigg\Vert D^{i}\bigg(g(x,t)-\sum_{|\alpha|\leq m-i}\frac{D^{\alpha}g(0,t)}{\alpha!}x^{\alpha}\bigg)\bigg\Vert _{L_{\omega}^{p}L_{t}^{2}L_{x}^{2}(Q_{1})}\\
 & \leq & C\left\Vert f(x,t)-f(0,t)\right\Vert _{L_{\omega}^{p}L_{t}^{2}L_{x}^{2}(Q_{1})}+C\sum_{i=0}^{m}\bigg\Vert D^{i}\bigg(g(x,t)-\sum_{|\alpha|\leq m}\frac{D^{\alpha}g(0,t)}{\alpha!}x^{\alpha}\bigg)\bigg\Vert _{L_{\omega}^{p}L_{t}^{2}L_{x}^{2}(Q_{1})}.
\end{eqnarray*}
Therefore, 
\[
\interleave D^{3m}u^{0}\interleave_{0;Q_{1/4}}+\interleave D^{4m}u^{0}\interleave_{0;Q_{1/4}}\leq CM_{1}.
\]
Hence, for $-8^{-2m}<s\leq t\leq0$ and $x\in B_{1/8}$,
\begin{eqnarray*}
 &  & \left\Vert D^{2m}u^{0}(x,t)-D^{2m}u^{0}(x,s)\right\Vert _{L_{\omega}^{p}}\\
 & = & \bigg\Vert \int_{s}^{t}-(-1)^{m}\sum_{|\gamma|=2m}A_{\gamma}D^{\gamma}(D^{2m}u^{0})\mathrm{d}t+\int_{s}^{t}\sum_{|\eta|=m}B_{\eta}^{k}D^{\eta}(D^{2m}u^{0})\mathrm{d}w_{t}^{k}\bigg\Vert _{L_{\omega}^{p}}\\
 & \leq & C\sqrt{t-s}\left(\interleave D^{3m}u^{0}\interleave_{0;Q_{1/8}}+\interleave D^{4m}u^{0}\interleave_{0;Q_{1/8}}\right)\leq C\sqrt{t-s}M_{1}\leq C(t-s)^{\frac{1}{2m}}M_{1}
\end{eqnarray*}
Analogous to the above steps we can get 
\[
\left\Vert D^{2m+1}u^{0}\right\Vert _{L_{\omega}^{p}L_{t}^{2}L_{x}^{2}(Q_{1/4})}\leq CM_{1}.
\]
Thus we get
\begin{equation}
\left\Vert D^{2m}u^{0}(X)-D^{2m}u^{0}(Y)\right\Vert _{L_{\omega}^{p}}\leq CM_{1}|X-Y|_{\textup{p}},\quad\forall X,Y\in Q_{1/8}.\label{eq:u^0 oscillation}
\end{equation}
Note that $h^{\iota}$ satisfies
\begin{equation}
\mathrm{d}h^{\iota}=-(-1)^{m}\sum_{|\gamma|=2m}A_{\gamma}D^{\gamma}h^{\iota}\mathrm{d}t+\sum_{k=1}^{\infty}\sum_{|\eta|=m}B_{\eta}^{k}D^{\eta}h^{\iota}\mathrm{d}w_{t}^{k}.
\end{equation}
in $Q^{\iota}$. By (\ref{eq:neighbour}) we have
\begin{align*}
\rho^{-m\iota}\interleave D^{3m}h^{\iota}\interleave_{0;Q^{\iota+1}}+\interleave D^{4m}h^{\iota}\interleave_{0;Q^{\iota+1}} & \leq C\rho^{-2m\iota}\varpi(\rho^{\iota-1}),\\
\interleave D^{2m+1}h^{\iota}\interleave_{0;Q^{\iota+1}} & \leq C\rho^{-\iota}\varpi(\rho^{\iota-1}).
\end{align*}
Hence for $-\rho^{2m(\tilde{\kappa}+1)}\leq t\leq0$ and $|x|\leq\rho^{\tilde{\kappa}+1}$,
\begin{eqnarray*}
 &  & \left\Vert D^{2m}h^{\iota}(x,t)-D^{2m}h^{\iota}(x,0)\right\Vert _{L_{\omega}^{p}}\\
 & \leq & C\left(\rho^{2m\tilde{\kappa}}\interleave D^{4m}h^{\iota}\interleave_{0;Q^{\iota+1}}+\rho^{m\tilde{\kappa}}\interleave D^{3m}h^{\iota}\interleave_{0;Q^{\iota+1}}\right)\\
 & \leq & C\rho^{m(\tilde{\kappa}-\iota)}\varpi(\rho^{\iota-1}),
\end{eqnarray*}
and
\[
\left\Vert D^{2m}h^{\iota}(x,0)-D^{2m}h^{\iota}(0,0)\right\Vert _{L_{\omega}^{p}}\leq C\rho^{\tilde{\kappa}-\iota}\varpi(\rho^{\iota-1}).
\]
Combining the last two estimates and (\ref{eq:u^0 oscillation}),
we can obtain
\begin{eqnarray*}
K_{2} & = & \left\Vert D^{2m}u^{\tilde{\kappa}}(Y)-D^{2m}u^{\tilde{\kappa}}(0)\right\Vert _{L_{\omega}^{p}}\\
 & \leq & \left\Vert D^{2m}u^{0}(Y)-D^{2m}u^{0}(0)\right\Vert _{L_{\omega}^{p}}+\sum_{\iota=1}^{\tilde{\kappa}}\left\Vert D^{2m}h^{\iota}(Y)-D^{2m}h^{\iota}(0)\right\Vert _{L_{\omega}^{p}}\\
 & \leq & CM_{1}\rho^{\tilde{\kappa}+1}+C\sum_{\iota=1}^{\tilde{\kappa}}\rho^{\tilde{\kappa}-\iota}\varpi(\rho^{\iota-1})\\
 & \leq & C\dis M_{1}+C\dis\int_{\dis}^{1}\frac{\varpi(r)}{r^{2}}\mathrm{d}r.
\end{eqnarray*}
The lemma is proved.
\qed
\end{proof}
\begin{lemma}
\label{claim:K3}$$K_{3}\leq C\varpi(\dis)+C\int_{0}^{8\dis}\frac{\varpi(r)}{r}\mathrm{d}r.$$
\end{lemma}

\begin{proof}
We consider the following sequence of equations
\begin{eqnarray*}
\mathrm{d}u^{Y,\kappa} & = & \bigg[-(-1)^{m}\sum_{|\gamma|=2m}A_{\gamma}D^{\gamma}u^{Y,\kappa}+f(y,t)\bigg]\mathrm{d}t\\
 &  & +\sum_{k=1}^{\infty}\bigg[\sum_{|\eta|=m}B_{\eta}^{k}D^{\eta}u^{Y,\kappa}+\sum_{|\alpha|\leq m}\frac{D^{\alpha}g^{k}(y,t)}{\alpha!}(x-y)^{\alpha}\bigg]\mathrm{d}w_{t}^{k}\quad\mbox{in }Q^{\kappa}(Y)\\
D^{\alpha}u^{Y,\kappa} & = & D^{\alpha}u\qquad\mbox{on }\partial_{\textup{p}}Q^{\kappa}(Y),\:|\alpha|\leq m-1,\mbox{ with }\kappa=0,1,\ldots,\tilde{\kappa}-1,\tilde{\kappa}+2,\ldots;
\end{eqnarray*}
the equations associated with $\tilde{\kappa}$ and $\tilde{\kappa}+1$
are replaced by the following single equation
\begin{eqnarray*}
\mathrm{d}u^{Y,\tilde{\kappa}} & = & \bigg[-(-1)^{m}\sum_{|\gamma|=2m}A_{\gamma}D^{\gamma}u^{Y,\tilde{\kappa}}+f(y,t)\bigg]\mathrm{d}t\\
 &  & +\sum_{k=1}^{\infty}\bigg[\sum_{|\eta|=m}B_{\eta}^{k}D^{\eta}u^{Y,\tilde{\kappa}}+\sum_{|\alpha|\leq m}\frac{D^{\alpha}g^{k}(y,t)}{\alpha!}(x-y)^{\alpha}\bigg]\mathrm{d}w_{t}^{k}\quad\mbox{in }Q^{\kappa}\\
D^{\alpha}u^{Y,\tilde{\kappa}} & = & D^{\alpha}u\qquad\mbox{on }\partial_{\textup{p}}Q^{\kappa}(0),\quad|\alpha|\leq m-1.
\end{eqnarray*}
As $|Y|_{\textup{p}}\in[\rho^{\tilde{\kappa}+2},\rho^{\tilde{\kappa}+1})$,
it is easily seen that $Q^{\tilde{\kappa}+2}(Y)\subset Q^{\tilde{\kappa}}(0)$.
So analogous to the proof of (\ref{eq:20}) we have
\[
\left\Vert D^{2m}u^{Y,\tilde{\kappa}}(Y)-D^{2m}u(Y)\right\Vert _{L_{\omega}^{p}}\leq C\int_{0}^{\rho^{\tilde{\kappa}-1}}\frac{\varpi(r)}{r}\mathrm{d}r,
\]
where $C=C(n,m,\lambda,K,p)$. On the other hand, combining (\ref{eq:embedding}),
(\ref{eq:Diric esti}) and (\ref{eq:Dini continuous}), one can derive
\[
\left\Vert D^{2m}u^{Y,\tilde{\kappa}}(Y)-D^{2m}u^{\tilde{\kappa}}(Y)\right\Vert _{L_{\omega}^{p}}\leq C\varpi(\dis).
\]
Then we have 
\[
\left\Vert D^{2m}u^{\tilde{\kappa}}(Y)-D^{2m}u(Y)\right\Vert _{L_{\omega}^{p}}\leq C\varpi(\dis)+C\int_{0}^{8\dis}\frac{\varpi(r)}{r}\mathrm{d}r.
\]
The lemma is proved.
\qed
\end{proof}

Now recalling \eqref{eq:003} and combining Lemmas \ref{claim:.K1}, \ref{claim:K2} and
\ref{claim:K3}, one has that
\[
\Vert D^{2m}u(Y)-D^{2m}u(0)\Vert_{L_{\omega}^{p}}\leq C\left[\dis M_{1}+\int_{0}^{\dis}\frac{\varpi(r)}{r}\mathrm{d}r+\dis\int_{\dis}^{1}\frac{\varpi(r)}{r^{2}}\mathrm{d}r\right].
\]
The proof of Theorem \ref{thm:dini} is complete.
\qed
\end{proof}
From the above theorem, one can easily derive the following interior
H\"older estimate for (\ref{eq:model}), where we denote $\mathcal{Q}_{r,T}=B_{r}\times[0,T]$
for $r$, $T>0$.
\begin{corollary}
\label{cor:local space global time}If u is a quasi-classical solution
of (\ref{eq:model}) in $\mathbb{R}^{n}\times[0,\infty)$ with zero
initial condition and $\hd\in(0,1)$. Then there is a positive constant
C depending only on n, m, p, K, $\lambda$ and $\hd$, such that
\begin{equation}
\left\llbracket D^{2m}u\right\rrbracket _{(\hd,\hd/2m);\mathcal{Q}_{1/8,T}}\leq C\left[\interleave u\interleave_{m-1;\mathcal{Q}_{1,T}}+\frac{\interleave f\interleave_{\hd;\mathcal{Q}_{1,T}}+\interleave g\interleave_{m+\hd;\mathcal{Q}_{1,T}}}{\hd(1-\hd)}\right]
\end{equation}
for any $T>0$, provided the right-hand side is finite.
\end{corollary}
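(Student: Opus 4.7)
Since $f(\cdot, t)$ is $C^\hd$ and $D^m g(\cdot, t)$ is $C^\hd$ uniformly in $t$, the modulus of continuity $\varpi$ from Theorem~\ref{thm:dini} satisfies $\varpi(r) \leq N r^\hd$ with $N := \interleave f\interleave_{\hd;\mathcal{Q}_{1,T}} + \interleave g\interleave_{m+\hd;\mathcal{Q}_{1,T}}$. The elementary integrals
\[
\int_0^\dis r^{\hd - 1}\,\md r = \frac{\dis^\hd}{\hd}, \qquad \dis \int_\dis^1 r^{\hd - 2}\,\md r \leq \frac{\dis^\hd}{1-\hd},
\]
together with $\dis \leq \dis^\hd$ for $\dis \in (0,1]$, convert the Dini-type bound (\ref{eq:Dini es}) of Theorem~\ref{thm:dini} into
\[
\|D^{2m} u(X) - D^{2m} u(Y)\|_{L^p_\omega} \leq C \dis^\hd \Big[M_1 + \tfrac{N}{\hd(1-\hd)}\Big], \qquad \dis = |X-Y|_{\mathrm{p}},
\]
for $X$, $Y$ within the $Q_{1/16}$ neighborhood of whatever point plays the role of the origin.

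To apply this at every base point $X_0 = (x_0, t_0) \in \mathcal{Q}_{1/8, T}$, I translate and rescale with the fixed factor $r = 7/8$, so that $B_r(x_0) \subset B_1$. When $t_0 < r^{2m}$ the cylinder $Q_r(X_0)$ dips below $t = 0$, so I first extend $u$, $f$, and $g$ by zero to $t<0$; since $u(\cdot,0)=0$, the equation is trivially satisfied for $t<0$, and the $C^\hd_x$ norms (taken as sup in $t$) are unchanged. Passing to $v(y,s) = u(x_0+ry,\,t_0+r^{2m}s)$ on $Q_1$, a direct computation of the scaling factors shows that $M_{1,v}$ and the rescaled modulus are bounded by universal constants times $\interleave u\interleave_{m-1;\mathcal{Q}_{1,T}}$ and $N$ respectively; undoing the scaling yields
\[
\|D^{2m} u(X) - D^{2m} u(Y)\|_{L^p_\omega} \leq C |X-Y|_{\mathrm{p}}^\hd \Big[\interleave u\interleave_{m-1; \mathcal{Q}_{1,T}} + \tfrac{N}{\hd(1-\hd)}\Big]
\]
for all $X, Y \in \mathcal{Q}_{1/8, T}$ with $|X-Y|_{\mathrm{p}} \leq c_0 := r/16$.

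For pairs with $|X-Y|_{\mathrm{p}} > c_0$ I use $\|D^{2m} u(X) - D^{2m} u(Y)\|_{L^p_\omega} \leq 2 \sup_{Z \in \mathcal{Q}_{1/8, T}} \|D^{2m} u(Z)\|_{L^p_\omega}$ and divide by $|X-Y|_{\mathrm{p}}^\hd \geq c_0^\hd$, so everything reduces to a uniform sup bound on $\|D^{2m} u(Z)\|_{L^p_\omega}$. I obtain this by inspecting the telescoping decomposition in the proof of Theorem~\ref{thm:dini}: writing $D^{2m} u = D^{2m} u^0 + \sum_{\kappa \ge 0} D^{2m}(u^{\kappa+1} - u^\kappa)$ at any base point, the series is controlled by $C\int_0^1 \varpi(r)/r\,\md r \leq CN/\hd$ exactly as in the estimate for $K_1$, while $\|D^{2m} u^0\|_{L^p_\omega}$ is bounded via Proposition~\ref{prop:Diri} combined with the interior Sobolev embedding~(\ref{eq:embedding}) applied to the Dirichlet problem defining $u^0$, whose forcing depends only on $t$ or is polynomial in $x$ of degree $\le m$ (hence has all spatial derivatives bounded in terms of $N$ and the low-order norms of $u$). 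The main obstacle is the careful bookkeeping in the rescaling and the zero-extension device near $t=0$, ensuring that all constants depend only on $n$, $m$, $p$, $K$, $\lambda$, $\hd$ and not on $T$ or on the base point; once that is in place, the local Hölder bound and the sup bound combine to give the claimed estimate.
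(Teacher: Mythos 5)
Your argument is correct and broadly parallels the paper's: both convert the Dini estimate of Theorem~\ref{thm:dini} to a H\"older one via $\varpi(r)\le Nr^{\hd}$ and the same elementary integrals, and both exploit the zero-extension of $u$, $f$, $g$ to negative times so that the interior estimate can be applied at base points near $t=0$. Where you diverge is the gluing step. The paper translates the local estimate to each base point and then invokes the localization property of H\"older norms (Lemma~4.1.1 in \cite{krylov1996l_p}), which passes from local seminorm bounds to the global parabolic seminorm and internally absorbs far-apart pairs through interpolation with the sup norm of $\tilde{u}$. You instead split on whether $|X-Y|_{\mathrm{p}}\le c_0$ or $|X-Y|_{\mathrm{p}}>c_0$ and handle the long-range case by establishing a sup bound on $\Vert D^{2m}u\Vert_{L^{p}_{\omega}}$ from the telescoping series $D^{2m}u^{0}+\sum_{\kappa}D^{2m}(u^{\kappa+1}-u^{\kappa})$. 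This is a valid and more self-contained alternative: it avoids an external citation at the cost of re-deriving a bound that the cited lemma supplies implicitly. Two caveats to tighten: the extra rescaling by $r=7/8$ is superfluous, since pure translation already places the extended solution in the setting of Theorem~\ref{thm:dini} (this is exactly what the paper does); and for the sup bound note that $u^{0}$ has Dirichlet data $D^{\alpha}u$ rather than zero, so Proposition~\ref{prop:Diri} applies to $u^{0}-u$, and you need to feed in the interior estimates \eqref{eq:rescalling}--\eqref{eq:embedding} for $u$ itself exactly as in the proof of Lemma~\ref{claim:K2}, which already records $\interleave D^{3m}u^{0}\interleave_{0;Q_{1/4}}+\interleave D^{4m}u^{0}\interleave_{0;Q_{1/4}}\le CM_{1}$ and hence the needed sup control of $D^{2m}u^{0}$. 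With these adjustments your sketch closes.
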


\begin{proof}
Because of the zero initial condition, define $\tilde{u}(x,t)$, $\tilde{f}(x,t)$
and $\tilde{g}(x,t)$ to be zero whenever $t\in[-1,0)$, and be equal
to $u(x,t)$, $f(x,t)$ and $g(x,t)$, respectively, whenever $t\geq0$.
Obviously, $\tilde{u}$ is a quasi-classical solution to (\ref{eq:model})
in $\mathbb{R}^{n}\times[-1,\infty)$. From (\ref{eq:Dini es}) we
have 
\[
\left\llbracket D^{2m}\tilde{u}\right\rrbracket _{\left(\hd,\hd/2m\right);Q_{1/8}(X)}\leq C\left[\interleave\tilde{u}\text{\ensuremath{\interleave}}_{m-1;Q_{1}(X)}+\frac{\interleave\tilde{f}\interleave_{\hd;Q_{1}(X)}+\interleave\tilde{g}\interleave_{m+\hd;Q_{1}(X)}}{\hd(1-\hd)}\right]
\]
for any $X=\left(x,t\right)\in\mathbb{R}^{n}\times[0,\infty)$. Using
the localization property of H\"{o}lder norms (see Lemma 4.1.1 in
\cite{krylov1996l_p}), we obtain 
\begin{eqnarray*}
 &  & \left\llbracket D^{2m}u\right\rrbracket _{(\hd,\hd/2m);\mathcal{Q}_{1/8,T}}
 \le  \left\llbracket D^{2m}\tilde{u}\right\rrbracket _{(\hd,\hd/2m);\mathcal{Q}_{1/8,T}}\\
 & \leq & C\sup_{t\in[0,T]}\left(\left\llbracket D^{2m}\tilde{u}\right\rrbracket _{(\hd,\hd/2m);Q_{1/8}(0,t)}+\interleave\tilde{u}\interleave_{0;Q_{1/8}(0,t)}\right)\\
 & \leq & C\left[\interleave u\interleave_{m-1;\mathcal{Q}_{1,T}}+\frac{\interleave\tilde{f}\interleave_{\hd;Q_{1}(X)}+\interleave\tilde{g}\interleave_{m+\hd;Q_{1}(X)}}{\hd(1-\hd)}\right].
\end{eqnarray*}
The proof is complete.
\qed
\end{proof}

\section{Global H\"{o}lder estimates and the solvability}

This section is devoted to the proof of Theorem \ref{thm:2}. 
We need two technical lemmas; readers are referred to \cite{du2019cauchy}
for their proofs.
\begin{lemma}
\label{lem:5.1}Let $\varphi$ be a bounded nonnegative function from
$[0,T]$ to $[0,\infty)$ satisfying
\[
\varphi(t)\leq\theta\varphi(s)+\sum_{i=1}^{k}C_{i}(s-t)^{-\theta_{i}},\quad\forall0\leq t<s\leq T,
\]
for some nonnegative constants $\theta,\theta_{i}$ and $C_{i}$ (i=1,$\ldots$
,k), where $\theta<1$. Then 
\[
\varphi(0)\leq C\sum_{i=1}^{k}C_{i}T^{-\theta_{i}},
\]
where $C$ depends only on $\theta_{1},\ldots\,,\theta_{k}$ and $\theta$.
\end{lemma}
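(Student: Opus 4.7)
The plan is a standard iteration argument: I would apply the hypothesis recursively along a sequence $0=t_0<t_1<t_2<\cdots$ in $[0,T)$ with $t_n\to T$, telescope, and then let $n\to\infty$. Concretely, I would set $t_j := T(1-\tau^j)$ for some $\tau\in(0,1)$ to be chosen, so that the consecutive gaps form the geometric sequence $t_{j+1}-t_j = T(1-\tau)\tau^j$; this is what converts each singular factor $(s-t)^{-\theta_i}$ into a clean power $\bigl(T(1-\tau)\bigr)^{-\theta_i}\tau^{-j\theta_i}$.

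Applying the assumption with $t=t_j$, $s=t_{j+1}$ and iterating $n$ times yields
\[
\varphi(0) \;\leq\; \theta^n \varphi(t_n) \;+\; \sum_{i=1}^{k} C_i\,\bigl(T(1-\tau)\bigr)^{-\theta_i} \sum_{j=0}^{n-1}\bigl(\theta\,\tau^{-\theta_i}\bigr)^{j}.
\]
The only delicate step is to select $\tau$ so that every geometric factor $\theta\,\tau^{-\theta_i}$ is strictly less than $1$, equivalently $\tau^{\theta_i}>\theta$. When $\theta_i=0$ there is nothing to check; when $\theta_i>0$ one needs $\tau>\theta^{1/\theta_i}$. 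Since $\theta<1$ forces $\theta^{1/\theta_i}<1$ for each such index, the open interval $\bigl(\max_{i:\,\theta_i>0}\theta^{1/\theta_i},\,1\bigr)$ is non-empty and any $\tau$ in it works simultaneously for all $i$; such a $\tau$ depends only on $\theta$ and the $\theta_i$.

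Finally, because $\varphi$ is bounded on $[0,T]$ and $\theta<1$, the remainder $\theta^n\varphi(t_n)$ tends to $0$ as $n\to\infty$. Passing to the limit in the displayed inequality yields the desired estimate $\varphi(0)\leq C\sum_i C_i T^{-\theta_i}$ with a constant $C=C(\theta,\theta_1,\dots,\theta_k)$ that absorbs the factors $(1-\tau)^{-\theta_i}$ and the values of the convergent geometric sums. I do not foresee a genuine obstacle here: this is essentially the classical Campanato-type iteration lemma extended to a $k$-term right-hand side, and the only point requiring care is the simultaneous choice of $\tau$ for all exponents, which is immediate once one observes that $\theta<1$ makes each $\theta^{1/\theta_i}$ strictly less than $1$.
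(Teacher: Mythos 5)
The paper itself does not prove this lemma; it simply refers the reader to \cite{du2019cauchy}. Your argument is correct and is the standard iteration proof for such Campanato-type lemmas: the geometric partition $t_j = T(1-\tau^j)$, the telescoping to $\varphi(0)\le\theta^n\varphi(t_n)+\sum_i C_i\bigl(T(1-\tau)\bigr)^{-\theta_i}\sum_{j<n}(\theta\tau^{-\theta_i})^j$, the observation that $\theta<1$ guarantees a simultaneous choice $\tau\in\bigl(\max_{i:\theta_i>0}\theta^{1/\theta_i},1\bigr)$ making every ratio $\theta\tau^{-\theta_i}$ strictly less than one, and the use of boundedness of $\varphi$ to kill the remainder $\theta^n\varphi(t_n)$ as $n\to\infty$ all check out, with the resulting constant depending only on $\theta$ and $\theta_1,\dots,\theta_k$ as required.
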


\begin{lemma}
\label{lem:5.2}Let $B_{R}=\left\{ x\in\mathbb{R}^{n}:|x|<R\right\} $
with $R>0$, $p\geq1$, and $0\leq s<r$. There exists a positive
constant $C$, depending only on $n$ and $p$, such that
\[
\left\llbracket u\right\rrbracket _{s;B_{R}}\leq C\varepsilon^{r-s}\left\llbracket u\right\rrbracket _{r;B_{R}}+C\varepsilon^{-s-n/p}\left\Vert u\right\Vert _{L^{p}(B_{R};L_{\omega}^{p})}
\]
for any $u\in C^{r}(B_{R};L_{\omega}^{p})$ and $\varepsilon\in(0,R)$.
\end{lemma}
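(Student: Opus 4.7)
This lemma is a Gagliardo--Nirenberg-type interpolation inequality between the $L^p_x L^p_\omega$-norm and the $L^p_\omega$-valued H\"older seminorm $\llbracket\cdot\rrbracket_r$, evaluated at an intermediate order $s$. My plan is to prove it in three steps, the core idea being an averaging identity inside $L^p_\omega$.

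\emph{Step 1 (pointwise bound).} First I would derive the auxiliary estimate
\[
\|u(x_0)\|_{L^p_\omega}\leq C\varepsilon^r\llbracket u\rrbracket_{r;B_R}+C\varepsilon^{-n/p}\|u\|_{L^p(B_R;L^p_\omega)}
\]
for any $x_0\in B_R$ and $\varepsilon\in(0,R)$ with $B_\varepsilon(x_0)\subset B_R$. The key idea is an averaging identity: writing $u(x_0)=u(y)+(u(x_0)-u(y))$, taking $\|\cdot\|_{L^p_\omega}$, and averaging over $y\in B_\varepsilon(x_0)$, Minkowski in $y$ bounds the first piece by $\fint_{B_\varepsilon(x_0)}\|u(y)\|_{L^p_\omega}dy\leq C\varepsilon^{-n/p}\|u\|_{L^p(B_R;L^p_\omega)}$ via H\"older in $y$, while the defining inequality of $\llbracket\cdot\rrbracket_r$ controls the second by $\varepsilon^r\llbracket u\rrbracket_r$ when $r\leq 1$. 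For $r>1$ one replaces $u(y)$ by its Taylor polynomial of $u$ at $x_0$ (of degree $\lfloor r\rfloor$), so that the remainder in the $L^p_\omega$-norm is $O(|y-x_0|^r)\llbracket u\rrbracket_r$.

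\emph{Step 2 (dichotomy on $|x-y|$).} For $s<r\leq 1$, I would estimate $\|u(x)-u(y)\|_{L^p_\omega}/|x-y|^s$ by splitting cases on the size of $|x-y|$ relative to $\varepsilon$. When $|x-y|\leq\varepsilon$, the H\"older bound $\|u(x)-u(y)\|_{L^p_\omega}\leq\llbracket u\rrbracket_r|x-y|^r$ gives at most $\varepsilon^{r-s}\llbracket u\rrbracket_r$ after dividing. When $|x-y|\geq\varepsilon$, applying Step~1 to $u(x)$ and $u(y)$ separately and dividing by $|x-y|^s\geq\varepsilon^s$ produces $C\varepsilon^{r-s}\llbracket u\rrbracket_r+C\varepsilon^{-s-n/p}\|u\|_{L^p(B_R;L^p_\omega)}$. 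Taking the supremum over $x,y\in B_R$ yields the lemma in the base range.

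\emph{Step 3 (higher orders by iteration).} For general $s=j+\sigma<r=k+\delta$ with $k,j\in\mathbb{N}$ and $\delta,\sigma\in(0,1)$, I would reduce to the base case by applying Steps~1--2 to the derivatives $D^\beta u$ with $|\beta|=j$, and then chaining the inequality across the intermediate integer orders $j+1,\ldots,k$ to trade off $L^p$-norms of successive derivatives for $\|u\|_{L^p}$ and $\llbracket u\rrbracket_r$. The main technical obstacle is the $r>1$ case of Step~1: the naive averaging loses the sharp $\varepsilon^r$-decay, so one must argue via the Taylor polynomial correction and check that the polynomial coefficients are themselves controlled by $\llbracket u\rrbracket_r$ and $\|u\|_{L^p}$ (this is where one uses Step~2 iteratively in the form of a bootstrap). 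Organising the iteration so that the final constant depends only on $n$ and $p$, as claimed, is the most delicate point.
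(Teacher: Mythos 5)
The paper does not prove Lemma~\ref{lem:5.2}; it refers to \cite{du2019cauchy} for the argument, so I will assess your plan on its own merits. For $r\le 1$ your Steps~1--2 are correct and are the standard route: average over $B_\varepsilon(x_0)\cap B_R$ (which for a ball has measure comparable to $\varepsilon^n$, so one does not in fact need $B_\varepsilon(x_0)\subset B_R$), obtain the pointwise bound, and then split $\|u(x)-u(y)\|_{L^p_\omega}/|x-y|^s$ according to whether $|x-y|\lessgtr\varepsilon$.

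The gap is in your treatment of $r>1$. As you set it up, to recover $u(x_0)$ from the average of $u$ over a ball you must subtract the Taylor polynomial $\sum_{1\le|\alpha|\le k}\frac{D^\alpha u(x_0)}{\alpha!}(y-x_0)^\alpha$, whose coefficients are precisely $\|D^\alpha u(x_0)\|_{L^p_\omega}$ for $1\le|\alpha|\le k$. These are the very quantities the lemma is meant to control (they are the $s=|\alpha|$ case), so the ``bootstrap'' you propose is circular: applying Step~2 to $D^\alpha u$ produces $\|D^\alpha u\|_{L^p(B_R)}$ on the right, and bounding that by $\|u\|_{L^p}$ is again an instance of the inequality being proved. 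The clean way to break the circularity is to replace the flat average by a weighted one: fix $\phi_\beta\in C_c^\infty(B_1)$ with the dual-basis property $\int z^\alpha\phi_\beta(z)\,dz=\alpha!\,\delta_{\alpha\beta}$ for all $|\alpha|\le k$, and set $\phi_{\beta,\varepsilon}(z)=\varepsilon^{-n-|\beta|}\phi_\beta(z/\varepsilon)$. Testing the Taylor expansion against $\phi_{\beta,\varepsilon}$ annihilates all polynomial terms except the $\beta$-th one, so the intermediate derivatives never appear and one gets directly
\[
\|D^\beta u(x_0)\|_{L^p_\omega}\le C\varepsilon^{r-|\beta|}\llbracket u\rrbracket_{r;B_R}+C\varepsilon^{-|\beta|-n/p}\|u\|_{L^p(B_R;L^p_\omega)},\qquad |\beta|\le k,
\]
with $C$ depending only on $n$, $p$ and $k$ (through $\phi_\beta$). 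Your Step~2 dichotomy then closes the argument for the seminorm: when $|x-y|\ge\varepsilon$ use this pointwise bound on $D^\beta u(x)$ and $D^\beta u(y)$ with $|\beta|=\lfloor s\rfloor$; when $|x-y|<\varepsilon$ use the mean value theorem with the pointwise bound on $D^{\lfloor s\rfloor+1}u$ if $\lfloor s\rfloor<\lfloor r\rfloor$, or the H\"older bound $\llbracket u\rrbracket_r|x-y|^{r-\lfloor r\rfloor}$ if $\lfloor s\rfloor=\lfloor r\rfloor$. This removes the need for the ``chaining across integer orders'' in your Step~3 entirely.
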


Now we are in a position to complete the proof of Theorem \ref{thm:2}.

\begin{proof}[Proof of Theorem \ref{thm:2}] The proof is divided into two steps.
\smallskip

\textbf{Step 1}. Global H\"{o}lder estimate (\ref{eq:global estimate}).

Suppose $u$ is the quasi-classical solution to (\ref{eq:dq}) with
zero initial condition. Let $\rho/2\leq r<R\leq\rho$ with $\rho\in(0,1/8)$
to be determined. Choose a nonnegative function $\zeta\in C_{0}^{\infty}(\mathbb{R}^{n})$
such that $\zeta(x)=1$ on $B_{r}$, $\zeta(x)=0$ outside $B_{R}$,
and for $\delta>0$, 
\[
\left[\zeta\right]_{\delta;\mathbb{R}^{n}}\leq C(R-r)^{-\delta}.
\]
Set $v=\zeta u$, and $A_{\alpha\beta,0}(t)=A_{\alpha\beta}(0,t)$,
$B_{\alpha,0}^{k}(t)=B_{\alpha}^{k}(0,t)$. Then $v$ satisfies 
\begin{align*}
\mathrm{d}v= & \bigg(-(-1)^{m}\sum_{|\alpha|=|\beta|=m}A_{\alpha\beta,0}D^{\alpha+\beta}v+\tilde{f}\bigg)\mathrm{d}t+\sum_{k=1}^{\infty}\bigg(\sum_{|\alpha|=m}B_{\alpha,0}^{k}D^{\alpha}u+\tilde{g^{k}}\bigg)\mathrm{d}w_{t}^{k}
\end{align*}
where 
\begin{eqnarray*}
\tilde{f} & = & (-1)^{m}\sum_{|\alpha|=|\beta|=m}A_{\alpha\beta,0}D^{\alpha+\beta}(\zeta u)-(-1)^{m}\sum_{|\alpha|,|\beta|\leq2m}\zeta A_{\alpha\beta}D^{\alpha+\beta}u+\zeta f,\\
\tilde{g^{k}} & = & -\sum_{|\alpha|=m}B_{\alpha,0}^{k}D^{\alpha}(\zeta u)+\sum_{|\alpha|\leq m}\zeta B_{\alpha}^{k}D^{\alpha}u+\zeta g^{k}.
\end{eqnarray*}
We denote $\mathcal{Q}_{R,\tau}=B_{R}\times(0,\tau)$ for $\tau>0$
and define
\[
M_{x,r}^{\tau}(u)=\sup_{t\in[0,\tau]}\bigg(\fint_{B_{r}(x)}\mathbb{E}|u(y,t)|^{p}\mathrm{d}y\bigg)^{1/p},\quad M_{r}^{\tau}(u)=\sup_{x\in\mathbb{R}^{n}}M_{x,r}^{\tau}(u).
\]
Then following from Lemma \ref{lem:5.2}, we directly derive
\begin{eqnarray*}
\interleave\tilde{f}\interleave_{\hd;\mathcal{Q}_{R,\tau}} & \leq & \left(\varepsilon+KR^{\hd}\right)\left\llbracket u\right\rrbracket _{2m+\hd;\mathcal{Q}_{R,\tau}}+C_{1}\left(R-r\right)^{-2m-\hd-n/p}M_{0,R}^{\tau}(u)\\
 &  & +\left\llbracket f\right\rrbracket _{\hd;\mathcal{Q}_{R,\tau}}+C_{1}\left(R-r\right)^{-\hd}\interleave f\interleave_{0;\mathcal{Q}_{R,\tau}},\\
\interleave\tilde{g}\interleave_{m+\hd;\mathcal{Q}_{R,\tau}} & \leq & \left(\varepsilon+KR^{\hd}\right)\left\llbracket u\right\rrbracket _{2m+\hd;\mathcal{Q}_{R,\tau}}+C_{1}\left(R-r\right)^{-2m-\hd-n/p}M_{0,R}^{\tau}(u)\\
 &  & +\left\llbracket g\right\rrbracket _{m+\hd;\mathcal{Q}_{R,\tau}}+C_{1}\left(R-r\right)^{-m-\hd}\interleave g\interleave_{0;\mathcal{Q}_{R,\tau}}.
\end{eqnarray*}
In the above two inequalities, $C_{1}=C_{1}(n,p,K,\varepsilon,\rho)$.
Applying Corollary \ref{cor:local space global time} and taking positive
$\rho$, $\varepsilon$ so small that 
\[
\varepsilon+KR^{\hd}\leq\frac{\hd(1-\hd)}{4C}
\]
where $C$ is the constant in the corollary, then we get that
\begin{eqnarray*}
\left\llbracket u\right\rrbracket _{(2m+\hd,\hd/2m);\mathcal{Q}_{r,\tau}} & \leq & \frac{3}{4}\left\llbracket u\right\rrbracket _{2m+\hd;\mathcal{Q}_{R,\tau}}+C\left(R-r\right)^{-2m-\hd-n/p}M_{0,R}^{\tau}(u)\\
 &  & +C\left(R-r\right)^{-\hd}\interleave f\interleave_{\hd;\mathcal{Q}_{R,\tau}}+C\left(R-r\right)^{-m-\hd}\interleave g\interleave_{m+\hd;\mathcal{Q}_{R,\tau}}.
\end{eqnarray*}
Then by Lemma \ref{lem:5.1}, we obtain
\[
\left\llbracket u\right\rrbracket _{(2m+\hd,\hd/2m);\mathcal{Q}_{\rho/2,\tau}}\leq C\left(M_{0,\rho}^{\tau}(u)+\interleave f\interleave_{\hd;\mathcal{Q}_{\rho,\tau}}+\interleave g\interleave_{m+\hd;\mathcal{Q}_{\rho,\tau}}\right).
\]
Note that the above inequality is true for any point $x\in\mathbb{R}^{n}$
instead of $0$. Therefore, applying Lemma \ref{lem:5.2}, we have
\begin{eqnarray*}
\sup_{x\in\mathbb{R}^{n}}\interleave u\interleave_{(2m+\hd,\hd/2m);\mathcal{Q}_{\rho/2,\tau}(x)} & \leq & C\left(M_{\rho}^{\tau}(u)+\interleave f\interleave_{\hd;\mathcal{Q}_{\rho,\tau}}+\interleave g\interleave_{m+\hd;\mathcal{Q}_{\rho,\tau}}\right)\\
 & \leq & C\left(M_{\rho/2}^{\tau}(u)+\interleave f\interleave_{\hd;\mathcal{Q}_{\tau}}+\interleave g\interleave_{m+\hd;\mathcal{Q}_{\tau}}\right).
\end{eqnarray*}
The next step is to estimate $M_{\rho/2}^{\tau}(u)$. Applying It\^{o}'s
formula to $|u|^{p}$ and integrating in $\mathcal{Q}_{\rho/2,\tau}(x)\times\Omega$
with the use of Sobolev-Gagliargo-Nirenberg inequality, we get
\[
M_{\rho/2}^{\tau}(u)\leq C_{2}\tau\left(\sup_{x\in\mathbb{R}^{n}}\interleave u\interleave_{2m;\mathcal{Q}_{\rho/2,\tau}(x)}+\interleave f\interleave_{0;\mathcal{Q}_{\tau}}+\interleave g\interleave_{0;\mathcal{Q}_{\tau}}\right).
\]
Taking $\tau=2\left(CC_{2}\right)^{-1},$ the above two inequalities
yield
\begin{eqnarray*}
\sup_{x\in\mathbb{R}^{n}}\interleave u\interleave_{(2m+\hd,\hd/2m);\mathcal{Q}_{\rho/2,\tau}(x)} & \leq & C\left(\interleave f\interleave_{\hd;\mathcal{Q}_{\tau}}+\interleave g\interleave_{m+\hd;\mathcal{Q}_{\tau}}\right).
\end{eqnarray*}
Following from the localization property of H\"{o}lder norms, we
get
\begin{eqnarray}
\interleave u\interleave_{(2m+\hd,\hd/2m);\mathcal{Q}_{\tau}} & \leq & C_{\tau}\left(\interleave f\interleave_{\hd;\mathcal{Q}_{\tau}}+\interleave g\interleave_{m+\hd;\mathcal{Q}_{\tau}}\right)\label{eq:tauest}
\end{eqnarray}
with $C_{\tau}=C_{\tau}(n,m,\hd,\lambda,K,p)\geq1.$

Finally, we conclude the proof by induction. Assume that there is
a constant $C_{S}\geq1$ for some $S>0$ such that 
\[
\interleave u\interleave_{(2m+\hd,\hd/2m);\mathcal{Q}_{S}}\leq C_{S}\left(\interleave f\interleave_{\hd;\mathcal{Q}_{S}}+\interleave g\interleave_{m+\hd;\mathcal{Q}_{S}}\right).
\]
Then applying (\ref{eq:tauest}) to $v(x,t)\coloneqq u(x,t+S)-u(x,S)$
for $t\geq0$, we can derive that
\begin{eqnarray*}
\interleave v\interleave_{(2m+\hd,\hd/2m);\mathcal{Q}_{\tau}} & \leq & C_{\tau}\left(\interleave f\interleave_{\hd;\mathcal{Q}_{S+\tau}}+\interleave g\interleave_{m+\hd;\mathcal{Q}_{S+\tau}}+\tilde{C}\interleave u\interleave_{(2m+\hd,\hd/2m);\mathcal{Q}_{S}}\right)\\
 & \leq & C_{\tau}(1+\tilde{C}C_{S})\left(\interleave f\interleave_{\hd;\mathcal{Q}_{S+\tau}}+\interleave g\interleave_{m+\hd;\mathcal{Q}_{S+\tau}}\right)
\end{eqnarray*}
where $\tilde{C}=\tilde{C}(m,K)\geq1$. Thus we get 
\begin{eqnarray*}
\interleave u\interleave_{(2m+\hd,\hd/2m);\mathcal{Q}_{S+\tau}} & \leq & \interleave v\interleave_{(2m+\hd,\hd/2m);\mathcal{Q}_{\tau}}+\interleave u\interleave_{(2m+\hd,\hd/2m);\mathcal{Q}_{S}}\\
 & \leq & 3\tilde{C}C_{\tau}C_{S}\left(\interleave f\interleave_{\hd;\mathcal{Q}_{S+\tau}}+\interleave g\interleave_{m+\hd;\mathcal{Q}_{S+\tau}}\right)
\end{eqnarray*}
which means $C_{S+\tau}\leq3\tilde{C}C_{\tau}C_{S}$. As $\tau$ is
fixed, by iteration we have $C_{S}\leq C\textup{e}^{CS}$ where $C=C(n,m,\hd,\lambda,p,K)$.
This completes the proof of (\ref{eq:global estimate}).
\smallskip

\textbf{Step 2}. The solvability.

For simplicity, we denote
\[
L=-\left(-1\right)^{m}\sum_{\left|\alpha\right|,|\beta|\leq m}A_{\alpha\beta}D^{\alpha+\beta},\quad\varLambda^{k}=\sum_{\left|\alpha\right|\leq m}B_{\alpha}^{k}D^{\alpha}.
\]
Define
\[
L_{s}=sL+(1-s)\Delta^{2m},\quad\varLambda_{s}^{k}=s\varLambda^{k}
\]
where $s\in[0,1]$ and $\Delta^{2m}\coloneqq\sum_{\left|\gamma\right|=2m}\delta_{\gamma}D^{\gamma}$
where 
\[
\delta_{\gamma}\coloneqq\begin{cases}
1, & \gamma_{i}=2m\:\mbox{for some }1\leq i\leq n\\
0, & \mbox{other}.
\end{cases}
\]
Then consider the equation
\begin{equation}
\begin{aligned} & \mathrm{d}u=(L_{s}u+f)\mathrm{d}t+\sum_{k=1}^{\infty}(\varLambda_{s}^{k}u+g^{k})\mathrm{d}w_{t}^{k}\;\mbox{in }\mathcal{Q}\\
 & u(\cdot,0)=0\;\mbox{in }\mathbb{R}^{n}
\end{aligned}
\end{equation}
where $\mathcal{Q}\coloneqq\mathbb{R}^{n}\times[0,\infty)$. 
Evidently, the solutions of the above equations enjoy the estimate (\ref{eq:global estimate}) 
with the same dominating constant $C$ (independent of $s$).
So by the standard method of continuity (see \cite[Theorem~5.2]{gilbarg2015elliptic}), it suffices to show the solvability
of the following equation (the case $s=0$):
\begin{equation}
\mathrm{d}u=(\Delta^{2m}u+f)\mathrm{d}t+\sum_{k=1}^{\infty}g^{k}\mathrm{d}w_{t}^{k},\;u(\cdot,0)=0.\label{eq:simple eq}
\end{equation}

Letting $\text{\ensuremath{\varphi}:\ensuremath{\mathbb{R}}}^{n}\rightarrow\mathbb{R}$
be a nonnegative and symmetric mollifier (see Appendix in \cite{du2019cauchy})
and $\varphi^{\varepsilon}(x)=\varepsilon^{n}\varphi(x/\varepsilon)$,
we define $f^{\varepsilon}=\varphi^{\varepsilon}\ast f$ and $g^{\varepsilon}=\varphi^{\varepsilon}\ast g$.
From the results of Appendix in \cite{du2019cauchy}, we obtain that
$f^{\varepsilon}\in C_{x}^{\hd}(\mathcal{Q};L_{\omega}^{p})$ and
$g^{\varepsilon}\in C_{x}^{m+\hd}(\mathcal{Q};L_{\omega}^{p})$ satisfying
\begin{equation}
\interleave f^{\varepsilon}-f\interleave_{\hd/2;\mathcal{Q}_{T}}+\interleave g^{\varepsilon}-g\interleave_{m+\hd/2;\mathcal{Q}_{T}}\rightarrow0\;\mbox{as }\varepsilon\rightarrow0.\label{eq:f g estimate}
\end{equation}
Moreover, $f^{\varepsilon}(x,t,\omega)$ and $g^{\varepsilon}(x,t,\omega)$
are smooth in $x$ for any $(t,\omega)$, and $f^{\varepsilon}$,
$g^{\varepsilon}\in C^{k}(\mathcal{Q}_{T};L_{\omega}^{p})$ for all
$k\in\mathbb{N}$. For any $k\in\mathbb{N}$ and $2r>n$, we have
\begin{eqnarray*}
 &  & \mathbb{E}\left|\int_{\mathcal{Q}_{T}}(1+|x|^{2})^{-r}(|D^{k}f^{\varepsilon}(x,t)|^{2}+|D^{k}g^{\varepsilon}(x,t)|^{2})\mathrm{d}x\mathrm{d}t\right|^{\frac{p}{2}}\\
 & \leq & C\left|\int_{\mathcal{Q}_{T}}\left(1+|x|^{2}\right)^{-r}\left(\mathbb{E}|D^{k}f^{\varepsilon}|^{p}+\mathbb{E}|D^{k}g^{\varepsilon}|^{p}\right)^{\frac{2}{p}}\mathrm{d}x\mathrm{d}t\right|^{\frac{p}{2}}\\
 & \leq & CT^{\frac{p}{2}}\left(\interleave f^{\varepsilon}\interleave_{k;\mathcal{Q}_{T}}^{p}+\interleave g^{\varepsilon}\interleave_{k;\mathcal{Q}_{T}}^{p}\right)<\infty.
\end{eqnarray*}
Considering the weighted Sobolev spaces and analogously to proving
Lemma \ref{lem:the whole space estimate} but only with minor changes,
we derive that (\ref{eq:simple eq}) with free terms $f^{\varepsilon}$
and $g^{\varepsilon}$ admits a unique weak solution $u^{\varepsilon}$
satisfying 
\[
\mathbb{E}\sup_{t\in[0,T]}\left|\int_{\mathbb{R}^{n}}(1+|x|^{2})^{-r}\left|D^{k}u^{\varepsilon}(x,t)\right|^{2}\mathrm{d}x\right|^{\frac{p}{2}}<\infty\quad\forall k\in\mathbb{N}
\]
for any large $r$. Following from Sobolev's embedding theorem, $u^{\varepsilon}$
is smooth in $x$ and moreover, 
\begin{eqnarray*}
 &  & \mathbb{E}\sup_{(x,t)\in\mathcal{Q}_{T}}(1+|x|^{2})^{-\frac{rp}{2}}|D^{k}u^{\varepsilon}(x,t)|^{p}\\
 & \leq & C\mathbb{E}\sup_{t\in[0,T]}\Big\Vert (1+\left|x\right|^{2})^{-\frac{r}{2}}D^{k}u^{\varepsilon}\Big\Vert _{H_{x}^{n}}^{p}\\
 & \leq & C\mathbb{E}\sup_{t\in[0,T]}\sum_{i=0}^{n}\left|\int_{\mathbb{R}^{n}}(1+|x|^{2})^{-r}|D^{k+i}u^{\varepsilon}(x,t)|^{2}\mathrm{d}x\right|^{\frac{p}{2}}\\
 & < & \infty.
\end{eqnarray*}
Then we have $\mathbb{E}\left|D^{k}u^{\varepsilon}(x,t)\right|^{p}<\infty$
for each $(x,t)\in\mathcal{Q}_{T}$ and $k\in\mathbb{N}.$ From global
estimate (\ref{eq:global estimate}) with $\hd/2$ instead of $\hd$
and (\ref{eq:f g estimate}), we obtain
\[
\interleave u^{\varepsilon}-u^{\varepsilon'}\interleave_{2m;\mathcal{Q}_{T}}\leq C\left(\interleave f^{\varepsilon}-f^{\varepsilon'}\interleave_{\hd/2;\mathcal{Q}_{T}}+\interleave g^{\varepsilon}-g^{\varepsilon'}\interleave_{m+\hd/2;\mathcal{Q}_{T}}\right)\rightarrow0
\]
as $\varepsilon,\varepsilon'\rightarrow0$. Hence, $u^{\varepsilon}$
converges to a function $u\in C_{x,t}^{2m,0}(\mathcal{Q}_{T};L_{\omega}^{p})$
which is apparently a quasi-classical solution to (\ref{eq:simple eq}).
Then we can derive the uniqueness and regularity from the estimate
(\ref{eq:global estimate}). The solvability is proved.

To sum up, the proof of Theorem \ref{thm:2} is complete.
\qed
\end{proof}

\section{Proof of Lemma \ref{lem:sharpness}}

Recall Equation \eqref{eq:eg}:
\[
\textrm{d}u=(-1)^{m+1}D^{2m}u\textrm{d}t+\mu D^{m}u\textrm{d}w_{t}
\]
with the initial condition with the initial condition 
$$u(x,0)=\sum_{n\in\mathbb{Z}}\mathrm{e}^{-n^{2m}}\cdot\mathrm{e}^{\sqrt{-1}nx},
\quad x\in\mathbb{T}=\mathbb{R}/2\pi\mathbb{Z}.$$ 

Since $\mu^{2}<2$,
this equation admits a unique solution $u\in L^{2}(\Omega;C([0,T];H^{l}(\mathbb{T})))$
for any integer $l$ (cf.~\cite{krylov1979stochastic}). 
We shall prove that if $p>1+2/\mu^2$, then $\mathbb{E}\Vert u(\cdot,t)\Vert _{L^{2}(\mathbb{T})}^{p}=+\infty$
for any $t>2/\varepsilon$, where $\varepsilon = (p-1)\mu^{2}-2>0$.

Since $u\in L^{2}(\Omega;C([0,T];H^{l}(\mathbb{T})))$ for any
integer $l$, one can express $u$ in the Fourier series 
\[
u(x,t)=\sum_{n\in\mathbb{Z}}u_{n}(t)\mathrm{e}^{\sqrt{-1}nx}
\]
where $u_{n}(t)$ satisfies
\begin{align*}
\textrm{d}u_{n} & =(-1)^{m+1}(\sqrt{-1}n)^{2m}u_{n}\textrm{d}t+\mu(\sqrt{-1}n)^{m}u_{n}\textrm{d}w_{t}\\
 & =u_{n}(-n^{2m}\mathrm{d}t+\mu(\sqrt{-1})^{m}n^{m}\mathrm{d}w_{t}),\\
u_{n}(0) & =\mathrm{e}^{-n^{2m}}.
\end{align*}
Then we obtain that
\begin{align}\label{eq:odevity}
u_{n}(t)=\exp\left\{ -n^{2m}(1+t+\frac{(-1)^{m}\mu^{2}}{2}t)+\mu(\sqrt{-1})^{m}n^{m}w_{t}\right\} .
\end{align}
Set $f(t)\coloneqq2+2t+(-1)^{m}\mu^{2}t$, then 
\begin{align*}
|u_{n}(t)|^{2} & =\left|\exp\left\{ -n^{2m}f(t)+2\mu(\sqrt{-1})^{m}n^{m}w_{t}\right\} \right|\\
 & =\left|\exp\left\{ -f(t)\Big(n^{m}-\frac{\mu(\sqrt{-1})^{m}w_{t}}{f(t)}\Big)^{2}+\frac{\mu^{2}(-1)^{m}|w_{t}|^{2}}{f(t)}\right\} \right|.\nonumber 
\end{align*}

Using the condition that $m$ is \emph{even}, one can obtain 
\[
|u_{n}(t)|^{2}=\exp\left\{ -f(t)\left(n^{m}-\frac{\mu(-1)^{m/2}w_{t}}{f(t)}\right)^{2}+\frac{\mu^{2}|w_{t}|^{2}}{f(t)}\right\} .
\]
By Parseval's identity,
\begin{align*}
\left\Vert u(\cdot,t)\right\Vert _{L^{2}(\mathbb{T})}^{2} & =2\pi\sum_{n\in\mathbb{Z}}|u_{n}(t)|^{2}\\
 & =2\pi\sum_{n\in\mathbb{Z}}\exp\left\{ -f(t)\left(n^{m}-\frac{\mu(-1)^{m/2} w_{t}}{f(t)}\right)^{2}+\frac{\mu^{2}|w_{t}|^{2}}{f(t)}\right\} .
\end{align*}
Therefore, we have 
\begin{align*}
& \mathbb{E}\left\Vert u(\cdot,t)\right\Vert _{L^{2}(\mathbb{T})}^{p} 
=(2\pi)^{\frac{p}{2}}\mathbb{E}\left(\sum_{n\in\mathbb{Z}}\exp\left\{ -f(t)\left(n^{m}-\frac{\mu(-1)^{m/2}w_{t}}{f(t)}\right)^{2}+\frac{\mu^{2}|w_{t}|^{2}}{f(t)}\right\} \right)^{\frac{p}{2}}\\
 & =(2\pi)^{\frac{p-1}{2}}\int_{-\infty}^{+\infty}\exp\left\{ -\frac{y^{2}}{2}\right\} \left(\sum_{n\in\mathbb{Z}}\exp\left\{ -f(t)\left(n^{m}-\frac{\mu(-1)^{m/2} y}{f(t)/\sqrt{t}}\right)^{2}+\frac{\mu^{2}y^{2}}{f(t)/t}\right\} \right)^{\frac{p}{2}}\mathrm{d}y\\
 & = (2\pi)^{\frac{p-1}{2}}\int_{-\infty}^{+\infty}\exp\left\{ -\frac{y^{2}}{2}\left(1-\frac{p\mu^{2}}{f(t)/t}\right)\right\} \left(\sum_{n\in\mathbb{Z}}\exp\left\{ -f(t)\left(n^{m}-\frac{\mu(-1)^{m/2} y}{f(t)/\sqrt{t}}\right)^{2}\right\} \right)^{\frac{p}{2}}\mathrm{d}y.
\end{align*}
Noticing the fact that
$\mu(-1)^{m/2} y$ is positive on one of the intervals $(-\infty,0)$ and $(0,+\infty)$,
one has 
\begin{align*}
& \mathbb{E}\left\Vert u(\cdot,t)\right\Vert _{L^{2}(\mathbb{T})}^{p} \\
 & \geq(2\pi)^{\frac{p-1}{2}}\int_{0}^{+\infty}\exp\left\{ -\frac{y^{2}}{2}\left(1-\frac{p\mu^{2}}{f(t)/t}\right)\right\} \left(\sum_{n\in\mathbb{Z}}\exp\left\{ -f(t)\left(n^{m}-\frac{|\mu|y}{f(t)/\sqrt{t}}\right)^{2}\right\} \right)^{\frac{p}{2}}\mathrm{d}y\\
  & \geq(2\pi)^{\frac{p-1}{2}}\sum_{n\in\mathbb{Z}}\int_{0}^{+\infty} 
  \exp\left\{ -\frac{y^{2}}{2}\left(1-\frac{p\mu^{2}}{f(t)/t}\right)\right\} 
  \exp\left\{ -\frac{p}{2}f(t)\left(n^{m}-\frac{|\mu|y}{f(t)/\sqrt{t}}\right)^{2}\right\} \mathrm{d}y.
\end{align*}
Since 
\[
\Big| n^{m}-\frac{|\mu|y}{f(t)/\sqrt{t}}\Big| \le 1\quad 
\text{when } y\in \Big[{n^mf(t) \over |\mu|\sqrt{t}},{(n^m+1) f(t) \over |\mu|\sqrt{t}}\Big] ,
\]
one can further derive that
\begin{align*}
& \mathbb{E}\left\Vert u(\cdot,t)\right\Vert _{L^{2}(\mathbb{T})}^{p} \\
 & \geq(2\pi)^{\frac{p-1}{2}}\sum_{n\in\mathbb{Z}}\int_{n^{m}f(t)/(|\mu|\sqrt{t})}^{(n^{m}+1)f(t)/(|\mu|\sqrt{t})}\exp\left\{ -\frac{y^{2}}{2}\left(1-\frac{p\mu^{2}}{f(t)/t}\right)\right\} \exp\left\{ -\frac{p}{2}f(t)\right\} \mathrm{d}y\\
 & \geq(2\pi)^{\frac{p-1}{2}} \mathrm{e}^{-{p\over 2}f(t)}
 \sum_{n\in\mathbb{Z}}\int_{0}^{f(t) \over |\mu|\sqrt{t}}\exp\left\{ -\frac{(y+n^{m})^{2}}{2}\left(1-\frac{p\mu^{2}}{f(t)/t}\right)\right\} \mathrm{d}y.
\end{align*}
Obviously,
the last term is infinite if 
\begin{equation}
1-\frac{p\mu^{2}}{f(t)/t}<0,\label{eq:lambdap}
\end{equation}
which is satisfied when $t>2/\varepsilon$, where $\varepsilon = (p-1)\mu^{2}-2>0$. 
The lemma is proved.

\begin{remark}
When $m$ is odd, it follows from (\ref{eq:odevity}) that 
\[
|u_{n}(t)|^{2}=\exp\left\{ -n^{2m}f(t)\right\} 
\]
where $f(t)=2+(2-\mu^{2})t$. 
Furthermore, one can obtain
\[
\mathbb{E}\left\Vert u(\cdot,t)\right\Vert _{L^{2}(\mathbb{T})}^{p}=(2\pi)^{\frac{p}{2}}\left(\sum_{n\in\mathbb{Z}}\exp\left\{ -n^{2m}f(t)\right\} \right)^{\frac{p}{2}},
\]
which means that the condition $\mu^{2}<2$ is sufficient to ensure $\mathbb{E}\left\Vert u(\cdot,t)\right\Vert _{L^{2}(\mathbb{T})}^{p}<+\infty$ for any $p\ge 2$.
\end{remark}


\bibliographystyle{spmpsci}      
\bibliography{bibdata}   

%
%

\end{document}